\documentclass[11pt,english]{article}

\usepackage[T1]{fontenc}
\usepackage[utf8]{inputenc}

\usepackage{xcolor}

\usepackage{graphicx}
\usepackage{mathtools,amsfonts,amssymb}
\usepackage{mathrsfs} 
\usepackage{interval}

\definecolor{vertfonce}{rgb}{0.20, 0.46, 0.25}
\definecolor{rougefonce}{rgb}{0.64, 0.09, 0.20}

\usepackage{babel}
\usepackage[breaklinks=true,
            colorlinks=true,
            linkcolor=rougefonce,
            citecolor=vertfonce]{hyperref}

\newcommand{\RM}{\mathbb{R}}
\newcommand{\ZM}{\mathbb{Z}}

\newcommand{\NM}{\mathbb{N}}
\newcommand{\CM}{\mathbb{C}}
\newcommand{\ham}[1]{\mathcal{X}_{#1}}
\newcommand{\dd}[1]{\ensuremath{\operatorname{d}\!{#1}}}
\newcommand{\h}{\hbar}
\newcommand{\ohb}{{\O}(\hbar^\infty)}
\newcommand{\phy}{\varphi}
\newcommand{\abs}[1]{\left|#1\right|}
\newcommand{\norm}[1]{\left\|#1\right\|}
\newcommand{\deriv}[2]{\frac{\partial #1}{\partial #2}}
\newcommand{\restr}{\upharpoonright}
\newcommand{\ssi}{\Longleftrightarrow}

\newcommand{\theor}{Theorem}
\newcommand{\defin}{Definition}
\newcommand{\lemma}{Lemma}
\newcommand{\remar}{Remark}

\newcommand{\corol}{Corollary}
\newcommand{\propo}{Proposition}
\newcommand{\demon}{Proof}
\newcommand{\probl}{Problem}
\newtheorem{theo}{\theor}[section]
\newtheorem{theo*}{\theor}
\newtheorem{defi}[theo]{\defin}
\newtheorem{defi*}[theo*]{\defin}
\newtheorem{prop}[theo]{\propo}

\newtheorem{coro}[theo]{\corol}

\newcommand{\cqfd}{\hfill $\square$\par\vspace{1ex}}
\newcommand{\demons}[1][$\!\!$]{\noindent\textbf{\demon\ }\textsl{#1}\textbf{.}~}

\newenvironment{rema}
{\par\vspace{1ex}\refstepcounter{theo}%
\noindent\textbf{\remar~\thetheo} }
{~\hfill\mbox{$\triangle$}\par\vspace{1ex}}

\newenvironment{demo}[1][$\!\!$]
{\demons[#1]\ }
{\cqfd}

\numberwithin{equation}{section}

\DeclareUnicodeCharacter{1ECD}{\d o}

\newcommand{\C}{\mathscr{C}}
\renewcommand{\O}{\mathcal{O}}

\title{WKB for semiclassical operators:\\
  How to fly over caustics (and more)}

\author{\textsc{Vũ Ngọc} 
  San\footnote{Univ Rennes, CNRS, IRMAR --- UMR 6625, F-35000 Rennes, France}}

\begin{document}
\maketitle


\begin{abstract}
  The method initiated by Wentzel, Kramers, and Brillouin to find
  approximate solutions to the Schrödinger equation lies at the origin
  of the spectacular development of microlocal and semiclassical
  analysis. When used naively, the approach appears to break down at
  caustics, but Maslov showed how a simple generalization could
  overcome this difficulty. In this paper, after a partial historical
  review, we take advantage of more recent advances in microlocal
  analysis to present a unified treatment of this generalized
  Maslov-WKB method, using the microlocal sheaf-theoretic approach
  of~\cite{san-focus}. This framework provides a rigorous proof of the
  Bohr–Sommerfeld–Einstein–Brillouin–Keller quantization conditions
  for the eigenvalues of general semiclassical operators
  (pseudodifferential and Berezin–Toeplitz) in one degree of
  freedom. We also review some applications and extensions.
\end{abstract}

\section{EBK and WKB}

In 1926, almost simultaneously, three papers appeared to propose an
approximate solution to the newly published Schrödinger
equation~\cite{schrodinger26}:
\begin{equation}\label{equ:schrodinger}
  -\frac{\hbar^2}{2m} \frac{d^2 \Psi}{dx^2} + V \Psi = E \Psi,
\end{equation}
where $m$ is the particle mass, $V:x\mapsto V(x)$ the potential,
$\Psi:x\mapsto \Psi(x)$ the wavefunction, and $E$ the energy
eigenvalue. The three authors are Wentzel~\cite{Wentzel1926} (received
on June 18, 1926), Brillouin~\cite{Brillouin1926-cras,brillouin1926}
(received on July 1, 1926 and October 13, 1926),
Kramers~\cite{Kramers1926} (received on September 9, 1926).

This approximate solution, now called the WKB Ansatz, has the form
\begin{equation}
  \label{equ:wkb}
  \Psi(x) = a_\hbar(x) \exp\left(\frac{i}{\hbar} \varphi(x)\right),
\end{equation}
where the reduced Planck constant $\hbar$ is treated as a small real
parameter, and the function $a_\hbar(x)$ admits an asymptotic expansion in
non-negative integral powers of $\hbar$:
\[
  a_\h(x) = a_0(x) + \h a_1(x) + \h^2 a_2(x) + \cdots
\]
in some adequate topology. The fact that the small parameter appears
in front of the derivative in~\eqref{equ:schrodinger} make the limit
$\h\to 0$ \emph{singular}, in the sense that the ``kinetic'' term
$ -\frac{\hbar^2}{2m} \frac{d^2 \Psi}{dx^2} $ should \emph{not} be
considered smaller than the ``potential'' term $V\Psi$. The WKB
Ansatz~\eqref{equ:wkb} reflects this by the highly oscillatory
behavior of the phase (the function $\phy$ is real-valued.)

Both historically and scientifically, it would have been interesting
to be there, back in 1926, to interview these three scientists and
understand how the very same idea came up to their minds at the same
time. Investigating this nowadays would require the competence of a
historian, which I do not possess; the only thing I can say is that
Brillouin's papers mention Wentzel, and Kramers' paper, which, by the
way, was written in Utrecht\footnote{where Duistermaat, another name
  in this story, spent most of his career.}, does mention Brillouin
and Wentzel's work, which makes the usual ordering of the three
initials difficult to justify\footnote{The WKB ordering reflects the
  publications dates if one omits the CRAS
  note~\cite{Brillouin1926-cras}. French researchers often use BKW
  because it's alphabetical --- or for a less honorable reason.}. I
have no clue as to whether this was reciprocal.

On the other hand, this idea didn't come from nowhere, not at all. Already
in 1899, there were several papers by Horn~\cite{horn1899a,horn1899b}
treating the same differential equation with the same Ansatz, and he
himself based his work on previous studies by Sturm and others. Historical
papers usually also mention the work of Jeffreys~\cite{jeffreys1925}, which
interestingly was published just a few months before the famous Schrödinger
papers. So, it is actually difficult to claim that the WKB Ansatz was
really novel in 1926. On the contrary, one should not be surprised that, as
soon as Schrödinger published his equation, many researchers tried to find
a way to solve it and came up with this Ansatz.

What is really interesting in the WKB approach is that these authors were
the first --- to my knowledge --- to realize that the asymptotic expansion
of Horn and others actually enlightens the relationship between the old
classical mechanics of Hamilton and the new quantum mechanics of
Schrödinger. In particular, the phase $\varphi$ must satisfy the now called
Hamilton–Jacobi equation:
\begin{equation}
  H\left(x, \frac{\partial \varphi}{\partial x}\right) = E\,,
\end{equation}
where $H$ is Hamilton's function\footnote{From now on, I incorporate the
  mass $m$ into the new semiclassical parameter $\frac{\h}{m}$. }:
\[
  H(x,\xi) = \frac{1}{2}\norm{\xi}^2 + V(x)\,.
\]
Even more fascinating, they show how this Ansatz gives a very simple
justification of the EBK quantization rule of the old quantum theory.

This rule, a generalization of the Bohr-Sommerfeld quantization formula,
claims that in order for the energy $E$ to be a correct eigenvalue of the
Schrödinger operator, it should satisfy the following geometrical equation:
\begin{equation}
  \frac{1}{2\pi \hbar} \oint_{\gamma_E} \xi \, \dd{x} = n  \in \mathbb{Z},
\end{equation}
where $\gamma_E$ denotes a closed classical orbit in phase space with
energy $E$, and $\xi$ is the classical momentum (denoted by $p$ in the
physics literature) conjugate to the position coordinate $x$ (often
denoted by $q$). The integer $n$ is called the quantum number.

Like WKB, EBK is another concatenation of three authors' initials:
Einstein, Brillouin (the same Brillouin), and Keller. But this time,
contrary to WKB, the historical progression is clear. To make it
simple, one can say that Einstein~\cite{einstein-1917} proposed the
quantization rule based on geometric considerations (and of course on
earlier works, in particular by Bohr, Ehrenfest, Epstein,
Schwarzschild, Sommerfeld), then Brillouin showed the connection with
Schrödinger's new quantum mechanics (this is the same paper as the one
cited for WKB!), and finally Keller~\cite{keller-bs} explained that,
in order to be more accurate, the rules require a correction (now also
called the Maslov correction):
\begin{equation} \frac{1}{2\pi \hbar} \oint_{\gamma_E} \xi \, dx = n +
  \frac{\mu}{4}, \quad n \in \mathbb{Z},
\end{equation}
where $\mu$ is called the (Keller-)Maslov index; it is defined from
topological properties of the loop $\gamma_E$ and accounts for phase
shifts at caustics or turning points. This remarkably simple rule,
(partly) justified by the WKB Ansatz, bridges classical mechanics and
quantum mechanics by imposing discrete energy levels based on
classical trajectories and corrected by topological information.

\section{From WKB to Microlocal Analysis}

This is not the end of the story. In fact, I would rather say that the
WKB Ansatz was the starting point of a new and fascinating
mathematical development of intimidating breadth: microlocal analysis.

Indeed, mathematicians had decades of headaches trying to properly
justify the WKB approach. The first problem comes from turning points,
or caustics. These are points where the approximation ceases to be
uniform and in fact blows up. For the one-dimensional Schrödinger
equation, these turning points correspond, on the Hamiltonian side, to
a position $x$ where the momentum $\xi$ of the particle
vanishes. Thus, the energy level set has there a vertical tangent,
and hence the phase function $\varphi$ has an infinite derivative,
which forbids the Hamilton–Jacobi equation to be solved by smooth
functions.

At this point, it is difficult not to notice that the EBK quantization
rule, by contrast, has no singular behavior whatsoever at turning
points. So is EBK better than WKB, or more general? This is really
tempting to say so, because in fact not only does EBK have no
singularity, but it can also be generalized to any kind of Hamiltonian
function, not specifically the one obtained from Schrödinger
operators, which indicates that it could be used to solve eigenvalue
problems arising from more general partial differential equations.
The extension of the WKB method to a large class of operators was done
thirty years later by Lax~\cite{lax57}, but he was still constrained
by the caustic issue. It seems that Maslov~\cite{maslov} was the first
to substantially extend the WKB Ansatz to a form that would swallow
caustics using partial Fourier transforms. Then,
Hörmander~\cite{FIO1,hormander-all}, also inspired by Sato's
microfunctions~\cite{sato70}, wrapped up the whole thing into a
formidable theory now called (homogeneous) microlocal analysis.
Unfortunately, Hörmander neither wanted to motivate his theory by
quantum mechanics nor to apply it to this field. He was interested in
the regularity theory of distributional solutions to general partial
differential equations and did not use any small parameter
$\hbar$. Duistermaat~\cite{duistermaat-oscillatory} was the first to
incorporate the semi-classical parameter $\hbar$ back again in order
to use microlocal analysis towards recovering Maslov's results. This
was the birth of semiclassical analysis. In doing so, Duistermaat
relied on the theory of Fourier integral operators, recently developed
by Hörmander and himself~\cite{FIO2} (following a thread of ideas
including in particular the work of Egorov~\cite{egorov}), in order to
generalize the notion of WKB solutions, now called semi-classical
lagrangian distributions. Formally (that is, ignoring the problem of
convergence of integrals), a lagrangian distribution is simply a
linear superposition of standard WKB Ansätze, as follows:
\begin{equation}\label{equ:lagrangian}
  \Psi(x) = \frac{1}{(2\pi\hbar)^{n/2}}
  \int_{\RM^n} e^{\frac{i}{\hbar} \varphi(x, \theta)} \, a_{\h}(x, \theta) \, \dd{\theta}\,.
\end{equation}

\section{The lagrangian manifold viewpoint}

Consider the phase space $\mathbb{R}^{2n}$ equipped with the canonical
symplectic form
$\omega = \dd{\xi} \wedge \dd x = \sum_{j=1}^n \dd{\xi}_j \wedge \dd
x_j$. A lagrangian\footnote{I follow here Weinstein's
  use~\cite{weinstein-symplectic} of the fully adjectival form
  ``lagrangian''--- that is, without capital ``L''; I also second
  Weinstein in ending the word with ``ian'' instead of ``ean''; after
  all, Joseph Louis de Lagrange was born Giuseppe Luigi Lagrangia.}
(sub)manifold is a submanifold of $\mathbb{R}^{2n}$ of dimension $n$
on which the symplectic form vanishes. For instance, the ``horizontal
fiber'' $\{\xi=\textup{const}\}$ is a lagrangian submanifold (of some
importance, as we shall see later). Of course, the same notion extends
to any symplectic manifold. What do lagrangian manifolds have to do
with the Schrödinger equation? Well, Alan Weinstein had this now
famous sentence: ``everything is a lagrangian
manifold.''~\cite{weinstein-category}. I'm not going to fully justify
this, but let me explain why any WKB solution --- even in the
generalized Maslov sense~\eqref{equ:lagrangian} --- of the Schrödinger
equation is associated to a lagrangian manifold.

Recall that the phase $\varphi$ is a solution to the Hamilton-Jacobi
equation \begin{equation} H\left(x, \deriv{\varphi}{x}\right) =
  E.  \end{equation} So we have a natural subset of the energy level
set, defined by the set $\Lambda_\phy$ of all pairs
$(x, \deriv{\varphi}x)$, which is the graph of the differential of
$\varphi$.  I claim that this graph is a lagrangian
submanifold. Indeed, if you restrict the symplectic form
$\dd{\xi} \wedge \dd x$ to this graph, you
obtain: \begin{equation} \label{equ:phase} \dd{\xi} \wedge \dd
  x_{\restr \Lambda_\phy} = \dd(\xi\dd{x})_{\restr \Lambda_\phy} =
  \dd(\dd\varphi) = 0.  \end{equation} (The Liouville 1-form
$\alpha=\xi\dd x$ has this remarkable ``tautological'' property that,
when restricted to a section of the cotangent bundle --- \emph{i.e.},
a 1-form ---, it produces back the 1-form itself, here $\dd \phy$.)

In the case of the generalized WKB Ansatz, as in
Equation~(\ref{equ:lagrangian}), and under generic conditions on
$\varphi$, we can see by the stationary phase lemma that the integral
localizes on the critical points of the map \[ \theta \mapsto
\varphi(x, \theta), \] that is, on the set \[ C_\varphi = \left\{ (x,
\theta) \ \middle| \ \frac{\partial \varphi}{\partial \theta}(x,
\theta) = 0 \right\}.  \] At these localized points, the asymptotic
expansion of $\Psi(x)$ becomes a usual WKB Ansatz. Hence, one can show
that if it is a solution to the Schrödinger equation, we must have the
Hamilton-Jacobi equation \[ H\left(x, \frac{\partial \varphi}{\partial
x}(x, \theta)\right) = E, \quad \text{where } (x, \theta) \in
C_\varphi.  \] And again, one can check that the image of $C_\varphi$
under the map \[ (x, \theta) \mapsto \left(x, \frac{\partial
\varphi}{\partial x}(x, \theta)\right) \] is a lagrangian
submanifold. Remarkably, one can prove conversely that a (non-empty)
class of lagrangian distributions which are locally of the
form~\eqref{equ:lagrangian} can be associated to \emph{any} lagrangian
submanifold; this is one the main achievements of Maslov, extended to
a great generality by Hörmander (but for homogeneous lagrangian
submanifolds) and then Duistermaat; for a thorough explanation, I
recommend Duistermaat's paper~\cite{duistermaat-oscillatory}. It
follows that, in order to find a good phase for a (generalized) WKB
solution to the Schrödinger equation (or to any good semiclassical
equation, see below), it is simply enough to find a arbitrary
lagrangian submanifold contained in the required energy level set
$H(x,\xi)=E$.

One of the (many) magical aspects of lagrangian manifolds is that such
a lagrangian is automatically \emph{invariant} by the Hamiltonian flow
of $H$. Indeed, if $v$ is tangent to $\Lambda$, then \[
\omega(\ham{H},v) = -\dd H \cdot v = 0\,, \] the first equality
defining the Hamiltonian vector field $\ham{H}$ and the second one
expressing that $v$ is tangent to the level set of $H$. Therefore,
$\ham{H}$ is symplectically orthogonal to $\Lambda$. But since
$\Lambda$ is lagrangian, its symplectic orthogonal is itself, and
hence $\ham{H}$ is tangent to $\Lambda$!  \footnote{More details
concerning the geometry of the WKB approximation can be found in the
nice book~\cite{weinstein-bates}.}

We now see that solving the Schrödinger equation by means of the WKB
Ansatz, at first order $\O(\h)$, amounts to finding an \emph{invariant
lagrangian submanifold} of the phase space. (By invariant we shall
always mean invariant under the Hamiltonian flow of the classical
Hamiltonian $H$.)

The reader may wonder why we introduce general lagrangian manifolds
here, while in this paper we are mostly interested in the
one-dimensional case: the phase space is $\mathbb{R}^2$, or a
two-dimensional surface, and a lagrangian manifold is just a curve in
that space.  Well, one reason is that the first part of the WKB
papers, which deals with approximate solutions to the Schrödinger
equation, actually works in any dimension and hence defines general
lagrangian submanifolds, which provides a nice geometric
interpretation of the wave function $\Psi$. However, the second part,
which aims at justifying the Bohr–Sommerfeld quantization condition,
is much more delicate and only works in dimension 1, or for separable
or completely integrable systems. But, and this is our second reason,
it turns out that our main tool for a rigorous justification of the
Bohr-Sommerfeld condition, even in dimension 1, will be Fourier
Integral Operators, which themselves are actually WKB functions
defined in a higher dimensional space.

\section{Semiclassical operators and microlocal solutions}

Microlocal analysis was first developed for (semiclassical)
pseudodifferential operators, which is a large class of linear
operators containing all differential operators with smooth
coefficients (and hence the Schrödinger
operator)\footnote{Technically, one requires a moderate growth of the
coefficients; for a Schrödinger operator, this means that $V$ must be
smooth and behave at most polynomially at infinity. In current
microlocal terminology, the symbols of our operators must belong to an
appropriate \emph{class}, which we shall implicitly assume throughout
this article. See for instance~\cite{zworski-book-12} for
details.}. It was then realized that the whole theory can also be
applied to a very different class of operators, the so-called
Berezin-Toeplitz operators (see for
instance~\cite{lefloch-book}). These are not differential
operators. Instead, for each fixed value of $\hbar$, they act on a
finite-dimensional Hilbert space, the dimension of which grows to
infinity as $\hbar \to 0$. In this paper, following an idea
of~\cite[Appendix A]{san-yohann21}, we will call \emph{semiclassical
operators} operators that are either semiclassical pseudodifferential
operators or Berezin-Toeplitz operators. In both cases, we shall
denote by $\mathcal{H}$ the natural Hilbert space on which they act
(this space depends on $\h$ in the Berezin-Toeplitz case), and by $M$
the phase space (so $\mathcal{H}=L^2(\RM^n)$ and $M=\RM^{2n}$ in the
pseudodifferential case).

In the Berezin-Toeplitz case, the admissible values of $\h$ are
quantized: typically, $\h=1/k$ with $k\in\NM^*$. However, I wrote this
text with a bias toward pseudodifferential operators; hence, for
notational simplicity, we shall say ``$\h \in \interval[open
left]01$'' when we actually mean ``$\h$ admissible and inside
$\interval[open left]01$''. We shall also denote by $(x,\xi)$ a point
in phase space, even though for a general symplectic manifold there
might not be such global coordinates. In the Berezin-Toeplitz
quantization scheme, the WKB phase $e^{\frac{i}{\h}\phy(x)}$ must be
replaced by a holomorphic section of a prequantum line bundle, which
is flat on some lagrangian manifold $\Lambda$,
see~\cite{charles-bs}. (This also covers the ``generalized WKB
Ansatz''~\eqref{equ:lagrangian}.)

Let $P$ be a semiclassical operator with symbol $H(x, \xi)$. We say
that $\psi\in\mathcal{H}$ is a WKB solution of order $N$ to the
equation $(P - E)\psi = 0$ if $\psi$ is a lagrangian distribution
whose associated lagrangian submanifold is contained in the energy
level set $H = E$, and such that $\psi$ is a quasimode of order $N$,
that is \begin{equation} (P - E)\psi =
\mathcal{O}(\hbar^N).  \label{eq:quasimode} \end{equation} For
instance, when $P=\frac{\h}{i}\deriv{}{x}$, which is arguably the
simplest semiclassical operator, then $\psi(x) = C
e^{\frac{i}{\h}Ex}$, $x\in\RM$, is a \emph{plane wave} and a
lagrangian distribution associated with the lagrangian manifold
$\{(x,\xi); \quad \xi=E\}$, and is a WKB solution
to~\eqref{eq:quasimode} at any order.

For a lagrangian distribution, the quasimode
condition~\eqref{eq:quasimode} with $k = 1$ is automatic. However, for
larger $k$, one has to solve transport equations beyond the
Hamilton-Jacobi equation in order to obtain higher-order solutions. In
this paper, I do not describe the transport equations, because they
will be hidden in the more general theory of semiclassical Fourier
integral operators.

In which sense, or in which topology, do we want
Equation~\eqref{eq:quasimode} to hold?  It will be convenient to use
the notion of a \emph{microlocal solution}, which asserts that
Equation~\eqref{eq:quasimode} holds microlocally near a point $(x,
\xi)$ in phase space if and only if it holds in the $\mathcal{H}$ norm
after microlocal projection in a neighborhood of that point.  And, by
microlocal projection, we mean a semiclassical operator whose symbol
is $1$ in a small neighborhood of $(x, \xi)$ and $0$ outside some
compact set. An element $u_\h\in\mathcal{H}$ is said to be
\emph{microlocally supported}\footnote{or even ``microsupported''} in
some set $\Sigma\subset M$ if it is microlocally $\ohb$ outside of
$\Sigma$. The microlocal support is also called the semiclassical
wavefront set. See for instance~\cite[Definitions A.1 and
A.4]{san-yohann21}

Using semiclassical partitions of unity, one can show that the set of
microlocal solutions has a nice sheaf structure. This means that if
you have two solutions defined on two different small open sets with a
non-empty intersection, and which coincide on the intersection, then
they can be glued to produce a solution on the union of the open
sets. As we shall see, one can always choose the second solution such
that the coincidence on this intersection holds. In other words, any
\emph{local section} of the sheaf over the first open set extends to a
section over the union of both open sets. However, even if we have a
family of open sets covering a compact invariant lagrangian
submanifold, if that manifold is not simply connected, a \emph{global}
section of the sheaf might not always exist. If it does exist (which
in general will impose some conditions on $E$), then the solution
actually extends to a true global solution in $L^2$ of
Equation~\eqref{eq:quasimode}. Therefore, this proves (at least for
self-adjoint operators) that the energy $E$ is $\h^k$-close to the
spectrum of the operator $P$. These ideas were implemented by Colin de
Verdière in the pioneer article~\cite{colin-quasi-modes}.

Let us now see how it works more precisely in the one-dimensional
case.

\section{The Bohr-Sommerfeld cocycle}

Let $P$ be a self-adjoint semiclassical operator with discrete
spectrum in a spectral window $[E_1,E_2]$ (for instance, the
Schrödinger operator with a confining potential, or any
Berezin-Toeplitz operator on a compact Kähler manifold).

We shall assume from now on that $P$ has only one degree of freedom:
the phase space has dimension 2, and that the interval
$\interval{E_1}{E_2}$ is contained in the set of regular values of the
principal symbol $H$ of $P$. Moreover, we assume that
$H^{-1}(\interval{E_1}{E_2})$ is compact. We wish here not only to use
the WKB Ansatz to obtain approximate solutions to $(P-E)\psi = 0$, but
actually to show that \emph{all} solutions are indeed obtained in this
way, and in turn to obtain a very precise description of the spectrum
of $P$ in $[E_1,E_2]$.

For Schrödinger operators, specific proofs can be found in the
literature, see for instance the interesting lecture by
Voros~\cite{voros-bkw}. But for a statement of this generality, a full
microlocal approach is more appropriate. To my knowledge, the
papers~\cite{rozenbljum78,colinII,helffer-robert2} were among the
first ones to employ deep microlocal analysis to derive the
Bohr-Sommerfeld rules. Here, my plan is to be ``even more
microlocal'', in the sense that we start to describe the problem
locally near a point in phase space, and try to pass from local to
global. The strategy follows~\cite{san-focus,san-panoramas}, but I
shall give more details here.

Given an arbitrary function $\h\mapsto E(\h)$, let us denote by
$\mathcal{D}=\mathcal{D}_{(P,E)}$ the sheaf of microlocal solutions of
$P-E(\h)$. In other words, the set $\mathcal{D}(\Omega)$ of sections
of $\mathcal{D}$ over an open subset $\Omega\subset M$ is the space of
microlocal solutions $\psi_\h$ to the equation $(P-E(\h))\psi_\h=\ohb$
on $\Omega$.

First note that, by standard semiclassical ellipticity, if $E(\h)\to
E_0$, then $\mathcal{D}$ is supported in the energy level set $H=E_0$.
Since the phase space has dimension 2, this level set is generically
just a curve and hence a lagrangian submanifold by itself.  The key
result, which says that $\mathcal{D}$ is locally given by WKB
solutions near any regular point of $H$, is the following.

\begin{prop}[\cite{san-focus}]\label{prop:dim1} Near any regular point
$m$ of $H$, the space of microlocal solutions to $(P-E)\psi = 0$ is
one-dimensional, more precisely: \begin{itemize} \item There exists a
fixed\footnote{\emph{i.e.} independent of $\h$} neighborhood $\Omega$
of $m$, and for any $E\in H(\Omega)$, a WKB solution (\emph{i.e.} a
lagrangian distribution) $\Psi_\h=\Psi_{\h,E}$, depending smoothly on
$E$, with $\norm{\Psi_\h}=1$, such that \[ (P-E)\Psi_\h = \ohb\text{
on } \Omega\,.  \] \item If $\psi=\psi_\h$ satisfies $(P-E)\psi_\h =
\ohb$ near $m$ and $\norm{\psi_\h}=1$, then there exists a WKB
solution $\Psi_\h$ and a constant phase $C=C(E,\h)\in U(1)$, such
that \[ \psi_\h - C(\h)\Psi_\h = \ohb \text{ on }
\Omega\,.  \] \end{itemize} In both items the remainder $\ohb$ is
locally uniform in $E$.  \end{prop}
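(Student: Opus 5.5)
The approach is to reduce, by a microlocal conjugation, to the simplest semiclassical operator $\frac{\h}{i}\deriv{}{x}$, for which everything is explicit. Since $m$ is a regular point of $H$, the Darboux–Carathéodory theorem (a symplectic flow-box) gives a local symplectomorphism $\kappa$ from a neighborhood of $m$ to a neighborhood of the origin in $\RM^2$ with $H\circ\kappa^{-1}(x,\xi)=\xi+H(m)$. We quantize $\kappa$ by a semiclassical Fourier integral operator $U$, microlocally unitary near $m$; in the Berezin–Toeplitz case we use the FIOs from Toeplitz spaces to $L^2(\RM)$ as in~\cite{charles-bs}. Egorov's theorem then gives $U P U^{*}=\frac{\h}{i}\deriv{}{x}+H(m)+\h R$ microlocally near $0$, where $R$ is a pseudodifferential operator.

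The key step is to remove the remainder $\h R$ to all orders. We look for an elliptic pseudodifferential operator $A=\exp(iB)$, with $B=B_0+\h B_1+\cdots$, such that
\[
A^{-1}\Big(\tfrac{\h}{i}\deriv{}{x}+\h R\Big)A=\tfrac{\h}{i}\deriv{}{x}+f(\xi;\h)
\]
microlocally, where $f=\h f_1(\xi)+\h^2 f_2(\xi)+\cdots$ depends only on $\xi$. At each order this reduces to a cohomological equation $\{\xi,b_j\}=r_j-f_j$, that is $\partial_x b_j=r_j-\langle r_j\rangle$, where $\langle r_j\rangle$ is obtained by freezing $x=0$. It is solved by integrating in $x$ on a small box, and Borel summation then closes the construction. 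Composing, we get $\tilde U=A^{-1}U$ with $\tilde U(P-E)\tilde U^{*}=\frac{\h}{i}\deriv{}{x}-\tilde E(\h)$ microlocally near $0$, where $\tilde E=E-H(m)-f$. Since $f$ depends only on $\xi$, which is constant on the solutions, one may take $f$ to be a function of $E$ by an implicit-function argument in $\xi$; this dependence is smooth and uniform for $E$ in a compact set. I expect this normal form step, keeping uniformity in $E$ and handling the Berezin–Toeplitz setting in the same way, to be the main technical obstacle.

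With the normal form in hand, both items reduce to the model operator. For existence, the plane wave $e^{\frac{i}{\h}\tilde E x}$, microlocally cut off near $x=0$, is a microlocal solution. Pulling it back by $\tilde U^{*}$ gives a lagrangian distribution $\Psi_{\h,E}$ associated with $\{H=E\}\cap\Omega$; it depends smoothly on $E$, and we normalize it so that $\norm{\Psi_\h}=1$, with the norm taken after microlocal cutoff.

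For uniqueness, set $v=\tilde U\psi$. Then $\h\partial_x v-i\tilde E v=\ohb$ microlocally near $0$, so $w=e^{-\frac{i}{\h}\tilde E x}v$ satisfies $\partial_x w=\ohb$ microlocally, and hence $w$ is microlocally a function of $\xi$ alone. Since $w$ is also microsupported on $\xi=0$ (ellipticity away from the characteristic line), $w$ is microlocally equal to $c(\h)$ times a fixed cutoff of the constant function. Making this rigorous requires a small argument: cut off in $x$ with $\chi$, write $\partial_x(\chi w)=\chi' w+\ohb$, and integrate, using the fact that $\chi'$ is supported where $v$ is controlled. Finally, the normalization $\norm{\psi_\h}=\norm{\Psi_\h}=1$ forces $|c|=1+\ohb$, so we may replace $c$ by $C=c/|c|\in U(1)$.
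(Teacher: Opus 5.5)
Your proposal follows the paper's route. You use a Darboux--Carathéodory straightening of $H$, an FIO quantizing it together with Egorov's theorem, and a conjugation by a pseudodifferential operator that removes the lower-order terms (this is where the transport equations are hidden). You then reduce to the plane waves of $\frac{\h}{i}\partial_x$ and pull back.

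Keeping the term $f(\xi;\h)$ is harmless but unnecessary. On a small box the equation $\partial_x b_j=r_j$ has no solvability obstruction, so, as in the paper, you can conjugate all the way to $\frac{\h}{i}\partial_x+\ohb$. Then $E$ enters only through $e^{\frac{i}{\h}(E-H(m))x}$, smoothness and uniformity in $E$ are automatic, and your implicit-function step disappears. Done carefully, that step amounts to factoring $\frac{\h}{i}\partial_x+f(\frac{\h}{i}\partial_x;\h)-(E-H(m))$ as an elliptic Fourier multiplier times $\frac{\h}{i}\partial_x-\tilde\xi(E,\h)$. Your explicit integration argument for uniqueness in the model is a useful supplement to the paper's one-line assertion that the solutions are plane waves.

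One step is wrong, however: the claim that $\norm{\psi_\h}=\norm{\Psi_\h}=1$ forces $\abs{c}=1+\ohb$. Here $\norm{\psi_\h}=1$ is a global norm, while $\psi_\h=c\,\Psi_\h$ holds only microlocally on $\Omega$. Since the hypothesis constrains $\psi_\h$ only near $m$, it may carry most of its mass elsewhere on the level set or on another component. So $\abs{c}$ need not be close to $1$; it can even be $\ohb$, which makes $c/\abs{c}$ meaningless.

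The fix is simply to absorb $c$ into the WKB solution, since a constant multiple of a lagrangian distribution is again one. This is exactly what ``there exists a WKB solution $\Psi_\h$'' permits. The paper's proof likewise just keeps a general constant in $\Psi_\h=C\tilde U e^{\frac{i}{\h}(E-E_0)x}$.
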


When we say that a WKB solution depends smoothly on $E$ we mean that,
in~\eqref{equ:lagrangian}, both the phase $\phy$ and the amplitude
$a_\h$ depend smoothly on $E$, uniformly with respect to $\h$.  This
statement can be seen as a semiclassical variant of~\cite{FIO2}; it
relies on the theory of Fourier integral operators (FIO).

FIOs are perhaps the most powerful tool in microlocal analysis. One of
their celebrated uses is to quantize canonical transformations of
phase space as unitary operators. If $U$ is such an FIO quantizing the
canonical transformation (\emph{i.e.}  symplectic diffeomorphism)
$\chi$, then for any semiclassical operator $P$ with symbol
$H(x,\xi)$, the conjugated operator \[ U^{-1}P U \] is again a
semiclassical operator, and its symbol is $H\circ \chi + \O(\h)$ (this
statement is called the Egorov Theorem).  See for
instance~\cite[Proposition 4.3]{charles-bs} or~\cite[Theorem
11.5]{zworski-book-12}.

Remarkably, we have here another illustration of Weinstein's
``symplectic creed''~\cite{weinstein-category}, because FIOs are also
lagrangian distributions! (More precisely, the graph of $\chi$ in the
product space $\RM^{2n} \times \RM^{2n}$ is automatically a lagrangian
submanifold\footnote{this holds if the product $\RM^{2n} \times
\RM^{2n}$, sometimes denoted by $\RM^{2n} \times \overline{\RM^{2n}}$,
is equipped with the direct sum symplectic form $\omega \oplus
(-\omega)$}, and by definition an FIO is an operator whose Schwartz
kernel is a lagrangian distribution associated with this graph.)

  \begin{demo}[of Proposition~\ref{prop:dim1}] Since $m$ is regular:
  $\dd{H}(m) \neq 0$, there exist local canonical coordinates
  $(x,\xi)$ near $m$ such that $H(x,\xi) - H(m) = \xi$ when
  $(x,\xi)\in\RM^2$ stay in some neighborhood $\tilde \Omega$ of the
  origin (This the statement of Darboux-Carathéodory's theorem.)  Let
  $U$ be an FIO associated with this change of variables. Let
  $E_0=H(m)$. We obtain, microlocally near $m$, \[ U^{-1} (P - E_0) U
  = \frac{\h}{i}\deriv{}{x} + \O(\h)\,.  \] By a standard procedure
  (\emph{this is where the WKB transport equations are hidden!}
  See~\cite[Section 3.2]{san-fn} or the foundational
  paper~\cite[Proposition 6.1.4]{FIO2} --- in the homogeneous case.)
  one can multiply $U$ by a suitable unitary semiclassical operator
  $V$ such that, with $\tilde U := UV$, \[ {\tilde U}^{-1} (P - E_0)
  \tilde U = \frac{\h}{i}\deriv{}{x} + \ohb\,.  \]

    Working in coordinates, one can show that
  Proposition~\ref{prop:dim1} holds for the operator
  $\frac{\h}{i}\deriv{}{x} - E$; indeed, its solutions are plane waves
  $\psi(x) = C e^{\frac{i}{\h}Ex}$. This proves the proposition,
  with \[ \Psi_\h = C \tilde U e^{\frac{i}{\h}(E-E_0)x}\,.  \] Since
  $E\in H(\Omega)$, there exists $(x_0,\xi_0)\in\tilde\Omega$ such
  that $\xi_0=E-E_0$; hence the microsupport of
  $e^{\frac{i}{\h}(E-E_0)x}$ intersects $\tilde\Omega$, and we can
  normalize it by a uniform constant. The fact that, for any fixed
  $E$, such a $\Psi_\h$ is indeed a generalized WKB Ansatz associated
  with the lagrangian manifold $\{H=E\}$ follows from the fact that
  the Schwartz kernel of $U$ is a lagrangian distribution. By a
  stationary phase argument, one shows that the application of the
  semiclassical operator $V$ does not modify this (this is called the
  semiclassical FIO calculus, see for instance~\cite[section
  9.7]{guillemin-sternberg-semiclassical} or~\cite[Proposition
  2.7]{charles-bs}).  \end{demo}

  \begin{rema}\label{rema:parameter} In Proposition~\ref{prop:dim1},
  the energy parameter $E$ is purely decorative. Indeed, if $P$
  depends on an additional parameter $\lambda$, then the parametric
  version of the proposition holds. Therefore, one can set $P_E :=
  P-E$ and describe solutions to $P_E\Psi_\h = \ohb$ by viewing $E$ as
  a parameter. This remarks becomes far-reaching for spectral problems
  where the ``spectral'' parameter does not appear in a linear
  fashion.  \end{rema}

  This proposition allows us to understand in a simple way the sheaf
  $\mathcal{D}$ of microlocal solutions to the equation
  $(P-E)\psi=\ohb$, when the energy $E$ is a regular value of $H$:
  this sheaf is then simply a \emph{flat bundle}, meaning that for
  each point $m=(x_0,\xi_0)$ of the level set $H(x,\xi)=E$, there
  exists a neighborhood $U$ of $m$ in which any two microlocal
  solutions must differ by a multiplicative constant.

  Thus, at least theoretically, it is easy to determine when the sheaf
  has a \emph{global} section (\emph{i.e.} a microlocal solution in a
  full neighborhood of the energy level set $H=E$): this happens when
  the flat bundle is trivial, or equivalently when its first \v Cech
  cohomology group $\check H^1(\mathcal{D})$ vanishes (modulo $\ohb$).

  How to compute this cohomology? When $E$ is a regular value of $H$,
  we know that the level set $H=E$ is a finite union of closed smooth
  (embedded) curves $\C_k = \C_k(E)$, $k=1,\dots,d$. Let us consider a
  fixed $j$ and denote by $\mathcal{D}_k$ the restriction of
  $\mathcal{D}$ to a small tubular neighborhood of $\C_k$.  The curve
  $\C_k$ is diffeomorphic to a circle, and has a natural orientation
  given by the Hamiltonian vector field $\ham{H}$, which does not
  vanish on it by hypothesis. Let $\Omega_j$, $j=1,\dots,N_k$ be an
  open cover of $\C_k$ by small balls, on each of which
  Proposition~\ref{prop:dim1} hold, and whose centers $z_j\in \C_k$
  are ordered along the orientation of $\C_k$. For each $j$, the
  proposition gives a WKB function $\Psi_\h^{j}$ generating
  $\mathcal{D}(\Omega_j)$. By restricting to $\Omega_j\cap
  \Omega_{j+1}$ (indices are written modulo $N_k$), the uniqueness
  part of the proposition yields a constant phase $C_j = C_j(E,\h)\in
  U(1)$ such that \begin{equation} \Psi_\h^{(j+1)} = C_j \Psi_\h^{(j)}
  + \ohb \text{ on } \Omega_j\cap
  \Omega_{j+1}\,.  \label{equ:cocycle} \end{equation} \begin{defi} The
  assignment $\Omega_j\cap \Omega_{j+1} \mapsto C_j$ is called the
  \emph{Bohr-Sommerfeld cocycle} of the curve $\C_k$.  \end{defi}

  (It is indeed a cocycle in the sense of \v Cech cohomology.)

\begin{prop}\label{prop:bs} There exists a microlocal solution of
$(P-E)$ on a neighborhood of $\C_k$ if and only if the pair $(E,\h)$
satisfies the equation \begin{equation} \label{equ:bs} C_1(E,\h)\cdots
C_{N_k}(E,\h) = 1 + \ohb.  \end{equation} \end{prop}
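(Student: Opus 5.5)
We prove the two directions separately, using only Proposition~\ref{prop:dim1} and the gluing property of the sheaf $\mathcal{D}$. The guiding idea is that $\mathcal{D}_k$ is a flat line bundle over the circle $\C_k$, and the product of the transition constants along the loop is its holonomy. The proposition then just says that the holonomy is trivial modulo $\ohb$ exactly when a global section exists. Throughout, we may assume the cover is chosen so that only consecutive balls intersect, and that triple intersections $\Omega_{j-1}\cap\Omega_j\cap\Omega_{j+1}$ are empty; for $N_k\geq 3$ this is achieved by taking small balls centered along the curve.

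\emph{Necessity.} Let $\psi_\h$ be a microlocal solution of $(P-E)\psi_\h=\ohb$ on a neighborhood of $\C_k$. We may assume that $\psi_\h$ is not microlocally $\ohb$ near $\C_k$; otherwise the statement is empty, and we interpret "solution" as a nontrivial one, normalized near some point.

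\begin{itemize}
\item By the uniqueness part of Proposition~\ref{prop:dim1}, for each $j$ there is a constant $c_j(\h)$ with $\psi_\h=c_j\Psi^{(j)}_\h+\ohb$ on $\Omega_j$. Here the normalization constant is absorbed into $c_j$, which is no longer in $U(1)$ but has modulus bounded above and below.
\item On $\Omega_j\cap\Omega_{j+1}$, comparing this with \eqref{equ:cocycle} gives $c_{j+1}C_j\Psi^{(j)}_\h=c_j\Psi^{(j)}_\h+\ohb$.
\item Since $\Psi^{(j)}_\h$ is a WKB function with microsupport meeting $\Omega_j\cap\Omega_{j+1}$, its microlocal norm there is bounded below uniformly in $\h$. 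We arrange this by choosing the balls so that the overlaps contain a fixed piece of the curve, and using the explicit plane-wave model, whose microlocal norm on a fixed open set is $\asymp 1$. We may therefore divide and obtain $c_j=c_{j+1}C_j+\ohb$.
\item Taking the product around the cycle, with indices modulo $N_k$, gives $c_1=c_1\prod_j C_j+\ohb$. Because $\abs{c_1}$ is bounded below, this yields \eqref{equ:bs}.
\end{itemize}

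\emph{Sufficiency.} Assume \eqref{equ:bs}. We construct a global section explicitly.

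\begin{itemize}
\item Set $\psi^{(1)}=\Psi^{(1)}_\h$ and, inductively, $\psi^{(j+1)}=C_j^{-1}\psi^{(j)}$ rescaled appropriately; equivalently, take $\psi^{(j)}=(C_1\cdots C_{j-1})^{-1}\Psi^{(j)}_\h$.
\item By \eqref{equ:cocycle}, $\psi^{(j)}$ and $\psi^{(j+1)}$ agree modulo $\ohb$ on $\Omega_j\cap\Omega_{j+1}$ for $j=1,\dots,N_k-1$.
\item On the last overlap $\Omega_{N_k}\cap\Omega_1$ we have $\psi^{(N_k)}=(C_1\cdots C_{N_k-1})^{-1}\Psi^{(N_k)}_\h$. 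Using \eqref{equ:cocycle} with $j=N_k$, this equals $(C_1\cdots C_{N_k})^{-1}\Psi^{(1)}_\h+\ohb$, which by \eqref{equ:bs} is $\Psi^{(1)}_\h+\ohb$, since all the $C_j$ have modulus one.
\item We glue with a semiclassical partition of unity: take pseudodifferential cutoffs $\chi_j$ microsupported in $\Omega_j$ with $\sum_j\chi_j=I$ microlocally near $\C_k$, and set $\psi=\sum_j\chi_j\psi^{(j)}$.
\item On each $\Omega_l$, all the $\psi^{(j)}$ whose supports meet $\Omega_l$ coincide with $\psi^{(l)}$ modulo $\ohb$, so $\psi=\psi^{(l)}+\ohb$ microlocally there. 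Since being a microlocal solution is a local property, $\psi$ solves $(P-E)\psi=\ohb$ on a neighborhood of $\C_k$, and it is nontrivial.
\end{itemize}

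\emph{Main obstacle.} The delicate point is the uniformity in the necessity direction. We need the lower bound on the microlocal norm of $\Psi^{(j)}_\h$ on the overlaps, uniformly in $E$ and $\h$, so that the $\ohb$ remainders can be divided out. We would handle this through the normal form $\tilde U$ from the proof of Proposition~\ref{prop:dim1}: after conjugation, $\Psi^{(j)}_\h$ becomes a plane wave, and its norm after microlocal cutoff to a fixed open set meeting $\{\xi=E-E_0\}$ is explicitly bounded below.
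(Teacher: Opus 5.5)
Your proof is correct and is essentially the paper's argument. Necessity comes from the uniqueness part of Proposition~\ref{prop:dim1} together with telescoping $c_j=c_{j+1}C_j+\ohb$ around the loop; the paper phrases this as the cocycle being a coboundary, $C_j=d_{j+1}^{-1}d_j+\ohb$. Sufficiency uses the same choice $d_j=(C_1\cdots C_{j-1})^{-1}$ and then glues with a microlocal partition of unity. Your additional care about the chain-like cover and the uniform lower bound on the microlocal norm of $\Psi^{(j)}_\h$ on the overlaps, which you need to divide out the $\ohb$ remainders, makes explicit what the paper leaves implicit.
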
 \begin{demo} The
solution exists if and only if the Bohr-Sommerfeld cocycle is a
coboundary, \emph{i.e.} there exist constants $d_j=d_j(E,\h)\in
\CM^*$, $j=1,\dots N_k$, such
that \begin{equation} \label{equ:coboundary} C_j = d_{j+1}^{-1} d_j +
\ohb \,.  \end{equation} Indeed, in this case, we see
from~\eqref{equ:cocycle} that the local sections $d_j \Psi_\h^{(j)}$
actually coincide (modulo $\ohb$) on the intersections $\Omega_j\cap
\Omega_{j+1}$. Therefore, they can be glued together by a microlocal
partition of unity to form a solution on a full neighborhood of
$\C_k$.

  Conversely, if such a global section $\Psi_\h$ exists, by
  Proposition~\ref{prop:dim1} we have constants $d_j$ such that
  $\Psi_\h = d_j \Psi_\h^{(j)} + \ohb$ on each $\Omega_j$. In view
  of~\eqref{equ:cocycle}, it follows that~\eqref{equ:coboundary}
  holds.

  It remains to show why the coboundary
equation~\eqref{equ:coboundary} is equivalent to~\eqref{equ:bs}. Since
indices $j$ are taken modulo $N_k$, it is clear
that~\eqref{equ:coboundary} implies~\eqref{equ:bs}. Conversely,
if~\eqref{equ:bs} holds, we let $d_1=1$, $d_j=(C_1\cdots
C_{j-1})^{-1}$ for $j=2,\dots, C_k$, which gives \[ d_1^{-1} d_{N_k} =
(C_1\cdots C_{N_k-1})^{-1} = C_{N_k} + \ohb, \] which
proves~\eqref{equ:coboundary} for $j=N_k$ (other $j$'s are
automatic.)  \end{demo}

Equation~\eqref{equ:bs} will be called the Bohr-Sommerfeld condition
of the curve $\C_k$. Since the curves for different $k$ are disjoint,
we obtain: \begin{prop} There exists a non-trivial section of
$\mathcal{D}$, \emph{i.e.} a microlocal solution on a neighborhood of
$H=E$, if and only if \emph{at least one} of the Bohr-Sommerfeld
conditions associated with the curves $\C_1,\dots, \C_d$ is
satisfied.  \end{prop}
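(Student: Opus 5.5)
The plan is to reduce the statement to Proposition~\ref{prop:bs} by exploiting that the curves $\C_1,\dots,\C_d$ are pairwise disjoint compact sets, so the sheaf $\mathcal{D}$ splits as a direct sum of its restrictions $\mathcal{D}_k$.

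First, I fix pairwise disjoint open tubular neighborhoods $T_k$ of the curves $\C_k$. Their union $T$ is a neighborhood of the level set $\{H=E\}$. By semiclassical ellipticity (as noted before Proposition~\ref{prop:dim1}), any microlocal solution is microsupported in $\{H=E\}$. Hence a section of $\mathcal{D}$ over a neighborhood of $\{H=E\}$ is the same thing as a section over $T$, since outside $\{H=E\}$ it is microlocally $\ohb$.

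For the ``if'' direction, suppose the Bohr--Sommerfeld condition~\eqref{equ:bs} holds for some $k$. Proposition~\ref{prop:bs} then gives a microlocal solution $\psi_k$ on $T_k$, normalized so that $\norm{\psi_k}=1$ microlocally near $\C_k$ (e.g.\ $\psi_k=d_j\Psi^{(j)}_\h$ locally with $|d_j|=1$). To extend $\psi_k$ to all of $T$, I take a microlocal cutoff $\chi_k$ whose symbol is $1$ near $\C_k$ and vanishes near the other curves, and set $\psi=\chi_k\psi_k$. On $T_k$ this equals $\psi_k$ modulo $\ohb$, because $\psi_k$ is microsupported on $\C_k$. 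On $T_l$ with $l\neq k$ it is $\ohb$, which is trivially a solution there. The commutator $[P,\chi_k]$ has symbol supported where $\chi_k$ is non-constant, which is away from $\C_k$, so $(P-E)\psi=\ohb$. This gives a non-trivial global section.

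For the ``only if'' direction, let $\psi$ be a non-trivial section, meaning it is not $\ohb$ microlocally near $\{H=E\}$. Its microsupport lies in $\bigcup_k\C_k$, so for some $k$ the function $\psi$ is not microlocally $\ohb$ near $\C_k$. Its restriction to $T_k$ is then a non-trivial section of $\mathcal{D}_k$. The converse argument in Proposition~\ref{prop:bs} yields constants $d_j$ with $\psi=d_j\Psi^{(j)}_\h$ on $\Omega_j$, and these constants satisfy~\eqref{equ:coboundary}, hence~\eqref{equ:bs}.

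The main obstacle is this last step. Strictly speaking, the converse in Proposition~\ref{prop:bs} requires the $d_j$ to be invertible uniformly in $\h$. Non-triviality only says $\psi$ is not $\ohb$, so possibly only along a subsequence of $\h$, and the $d_j$ could degenerate. I would handle this by renormalizing $\psi$ so that its microlocal norm near $\C_k$ is $1$. Then every $|d_j|$ is bounded above and below, since each $|d_j|$ is comparable to $|d_{j+1}|$ through $|C_j|=1$, and the norm is comparable to $\max_j|d_j|$. With this normalization~\eqref{equ:bs} holds, understood along the relevant family of $\h$.
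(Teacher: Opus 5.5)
Your proposal is correct and follows the paper's route. The paper only remarks that the curves $\C_k$ are disjoint, so $\mathcal{D}$ splits over disjoint neighborhoods and Proposition~\ref{prop:bs} applies to each component separately; your decomposition, and your cutoff argument for the ``if'' direction (as in Step~2 of the proof of Theorem~\ref{theo:bs}), spell this out, and your renormalization along a subsequence of $\h$, which keeps the $d_j$ uniformly invertible, fills in a point the paper leaves implicit.
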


At this point, it is not clear how to relate this Bohr-Sommerfeld
condition to the EBK prediction. This is given by the following
proposition.  \begin{prop}\label{prop:semiclassical-action} The
semiclassical action $A_k(E,\h) := \log (C_1\cdots C_{N_k})$ admits an
asymptotic expansion of the form \begin{equation} \label{equ:A-das}
A_k(E,\h) = \frac{1}{\h} A_{k,0}(E) + A_{k,1}(E) + \h A_{k,2} + \h^2
A_{k,3} + \cdots \end{equation} with smooth coefficients $A_{k,j}$;
and \begin{equation} \label{equ:action} A_{k,0} = \int_{\C_k}
\alpha\,, \end{equation} where $\alpha$ is the Liouville
1-form.  \end{prop}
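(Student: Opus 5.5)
We compute the transition constants $C_j$ explicitly from the normal forms of Proposition~\ref{prop:dim1}, and check that their product has the claimed expansion. Since the cocycle is only defined up to coboundaries, and the product $C_1\cdots C_{N_k}$ is invariant under coboundaries, we may choose the local generators $\Psi_\h^{(j)}$ as we like on each $\Omega_j$. We fix them as WKB functions written in a common, convenient form.

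\emph{Step 1: local WKB representatives.} On each small ball $\Omega_j$ we parametrize $\C_k$ by a local phase. Wherever $\C_k$ is a graph over $x$, write $\Psi_\h^{(j)}=e^{\frac{i}{\h}\phy_j(x)}a^{(j)}_\h(x)$ with $\dd\phy_j=\alpha_{\restr\C_k}$ and $a^{(j)}_\h\sim\sum_\ell \h^\ell a^{(j)}_\ell$. Near a caustic, use the generalized form~\eqref{equ:lagrangian}, for instance via a partial $\h$-Fourier transform, with a generating function $\phy_j(x,\theta)$ whose critical value restricted to $\C_k$ is again a primitive of $\alpha$. In both cases the construction of Proposition~\ref{prop:dim1} provides such a representative, depending smoothly on $E$. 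This is because $\tilde U$ is an FIO whose kernel has a phase generating the canonical transformation, and composing with the plane wave $e^{\frac{i}{\h}(E-E_0)x}$ gives a lagrangian distribution whose phase is a local primitive of $\alpha$ on $\{H=E\}$, up to a constant depending smoothly on $E$.

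\emph{Step 2: comparing on overlaps.} On $\Omega_j\cap\Omega_{j+1}$, both $\Psi^{(j)}_\h$ and $\Psi^{(j+1)}_\h$ are lagrangian distributions on the same curve. We reduce them to a common representation on the overlap, using stationary phase when one is written with an auxiliary $\theta$. Then $\Psi^{(j+1)}_\h/\Psi^{(j)}_\h$ has the form
\[
\exp\Big(\tfrac{i}{\h}(\phy_{j+1}-\phy_j)\Big)\,\frac{a^{(j+1)}_\h}{a^{(j)}_\h}\,e^{i\pi\mu_j/4}.
\]
Here $\phy_{j+1}-\phy_j=:c_j(E)$ is constant on the overlap, since both are primitives of $\alpha$. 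The integer $\mu_j$ comes from the stationary phase signature and is nonzero only where a caustic separates the representations. The amplitude ratio is a constant with a full expansion $b_{j,0}(E)+\h b_{j,1}(E)+\cdots$, and $b_{j,0}\neq 0$. Taking logarithms, $\log C_j=\frac{i}{\h}c_j(E)+\big(i\pi\mu_j/4+\log b_{j,0}\big)+\h(\cdots)$, which is smooth in $E$ by the smooth dependence of the data in Proposition~\ref{prop:dim1}. Summing over $j$ gives~\eqref{equ:A-das} with smooth coefficients. The logarithm is understood modulo $2\pi i$, with a branch chosen continuously in $E$.

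\emph{Step 3: the leading term.} Up to the factor $i$, the leading coefficient is $\sum_j c_j(E)$. Choosing points $p_j\in\Omega_j\cap\Omega_{j+1}$ in cyclic order, we have $\phy_{j+1}(p_{j+1})-\phy_{j+1}(p_j)=\int_{p_j}^{p_{j+1}}\alpha$. Telescoping then gives $\sum_j c_j=\sum_j\int_{p_j}^{p_{j+1}}\alpha=\int_{\C_k}\alpha$, with the orientation given by $\ham{H}$. The factor $i$ reflects the paper's normalization, in which $A_{k,0}$ is to be read as $i\int_{\C_k}\alpha$, or equivalently the relation $C_1\cdots C_{N_k}=e^{\frac{i}{\h}\oint\alpha+\cdots}$ is meant. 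This is where the main bookkeeping lies.

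The main obstacle is Step 1 together with the caustic part of Step 2. One must check rigorously that the FIO-generated $\Psi^{(j)}_\h$ really has a phase that is a primitive of $\alpha$, with constants depending smoothly on $E$, and that the comparison across a caustic costs only an $\O(1)$ factor (the Maslov phase) rather than affecting the $\frac1\h$ term. I would handle this first in the model case of the fold $\xi^2=x$ using the Fourier transform explicitly. I would then reduce the general case to it via Egorov and the FIO calculus already quoted in the proof of Proposition~\ref{prop:dim1}.
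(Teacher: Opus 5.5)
Your route is the paper's. It too evaluates the overlap relation at one point to get $C_j\sim e^{\frac{i}{\h}S_0}(a_0+\h a_1+\cdots)$; your Maslov factor $e^{i\pi\mu_j/4}$ is simply absorbed into $a_0$, with $\abs{a_0}=1$ coming from $\abs{C_j}=1+\ohb$. Steps 1--2 are sound. The flaw is in Step 3: the telescoping has the wrong sign. Since $c_j=\phy_{j+1}(p_j)-\phy_j(p_j)$ and, cyclically, $\sum_j\phy_j(p_j)=\sum_j\phy_{j+1}(p_{j+1})$, you get
\[
\sum_j c_j=\sum_j\big(\phy_{j+1}(p_j)-\phy_{j+1}(p_{j+1})\big)=-\sum_j\int_{p_j}^{p_{j+1}}\alpha=-\int_{\C_k}\alpha .
\]
Here $\C_k$ carries the orientation of $\ham{H}$, along which the $z_j$ are ordered; this is the orientation for which $\frac{d}{dE}\int_{\C_k(E)}\alpha=\tau_k(E)>0$, as used later in the paper. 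A quick check: take $\frac{\h}{i}\deriv{}{x}$ on $\RM/2\pi\ZM$, with $\Psi_\h^{(j)}=e^{\frac{i}{\h}Ex}$ on lifted charts ordered by increasing $x$. The only nontrivial constant is the wrap-around one, and it equals $e^{-\frac{i}{\h}2\pi E}=e^{-\frac{i}{\h}\oint\xi\,\dd{x}}$.

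Hence, with the convention $\Psi_\h^{(j+1)}=C_j\Psi_\h^{(j)}$, one has $C_1\cdots C_{N_k}=\exp\big(-\frac{i}{\h}\int_{\C_k}\alpha+\O(1)\big)$. You are right that a factor $i$ is missing: the later use of $A_k(\,\cdot\,;\h)^{-1}(2\pi\ZM)$ and $A_{k,1}=\pi$ requires $A_k$ to be real. But literally $A_{k,0}=-i\int_{\C_k}\alpha$, not $i\int_{\C_k}\alpha$. The normalization that yields $A_{k,0}=\int_{\C_k}\alpha$ is $A_k:=i\log(C_1\cdots C_{N_k})$, or equivalently the cocycle defined by $\Psi_\h^{(j)}=C_j\Psi_\h^{(j+1)}$.

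The paper's own proof asserts $\sum_jS_0^{(j)}=\int_{\C_k}\alpha$ by appeal to ``the \v Cech cocycle of $\alpha$'', and so hides the same sign. This is harmless for the quantization condition, which is invariant under $A_k\mapsto-A_k$. But this identity is exactly the content of~\eqref{equ:action}, so the sign should be corrected rather than folded into a normalization remark.

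A minor scope point: your $x$/Fourier representatives only make sense in the pseudodifferential case. The paper's argument, via the lagrangian Schwartz kernels of the $U_j$, covers Berezin--Toeplitz operators as well.
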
 \begin{demo} We use the same quantum
Darboux-Carathéodory normal form as in the proof of
Proposition~\ref{prop:dim1}, this time repeated on each
$\Omega_j$. Thus, we may obtain the Bohr-Sommerfeld cocycle $(C_j)$ by
choosing $\Psi_\h^{(j)} := \tilde U_j e^{\frac{i}{\h}(E-H(m))x}$. (If
we viewed $E$ as a parameter, see Remark~\ref{rema:parameter}, we
could simply take $\Psi_\h^{(j)} := \tilde U_j 1 $.)

  From~\eqref{equ:cocycle} we obtain, microlocally on $ \Omega_j\cap
  \Omega_{j+1}$, \[ C_j = e^{-\frac{i}{\h}(E-H(m))x} U_j^{-1} U_{j+1}
  e^{\frac{i}{\h}(E-H(m))x} + \ohb\,.  \] Since $C_j$ is constant, it
  can be evaluated on any point in $ \Omega_j\cap \Omega_{j+1}$. Since
  the Schwartz kernel of $U_j$ and $U_{j+1}$ are lagrangian
  distributions, \emph{i.e.} take the form~\eqref{equ:lagrangian},
  with an amplitude $a_\h$ admitting an asymptotic expansion in
  integral powers of $\h$, we obtain that, for each fixed $E$, $C_j$
  must have an asymptotic expansion of the
  form \begin{equation} \label{equ:Cj-das} C_j \sim
  e^{\frac{i}{\h}S_0}(a_0+\h a_1 + \h^2 a_2 +
  \cdots)\,, \end{equation} and all coefficients depend smoothly on
  $E$. Since $\abs{C_j}=1 +\ohb$ we must have $\abs{a_0}=1$. Hence we
  may also write \[ C_j \sim e^{\frac{i}{\h} \sum_{j\geq 0} \h^j
  S_j}\,, \] (as Brillouin~\cite{brillouin1926} suggested) which
  gives~\eqref{equ:A-das}.

  In order to compute $A_{k,0}$, recall that $\Psi_\h^{(j)}$ is a WKB
solution associated to the curve $\C_k\subset \{H=E\}$. Hence, near
any point $m$ in the lagrangian manifold $\C_k$, we have a phase
function $\phy_j$ as in~\eqref{equ:lagrangian};
using~\eqref{equ:phase}, this phase, viewed as a function on $\C_k$,
satisfies $\dd\phy_j = \alpha$, where $\alpha= \xi\dd x$ is the
Liouville 1-form.  Evaluating~\eqref{equ:cocycle} on a point $m$ of $
\Omega_j\cap \Omega_{j+1}$, we deduce that the coefficient $S_0
=S_0^{(j)}$ of~\eqref{equ:Cj-das} takes the form \[ S_0^{(j)} =
\phy_{j+1}(m) - \phy_j(m) \] but since $\dd (\phy_{j+1} - \phy_j)=0$,
the function $\phy_{j+1} - \phy_j$ is constant, and the set of all of
these constants, \emph{i.e.} $(S_0^{(j)})_{j=1,\dots, N_k}$ is exactly
the \v Cech cocycle associated with the closed 1-form $\alpha$ on
$\C_k$. (See for instance~\cite[Appendix C]{weinstein-bates}.) Thus,
$\sum_{j=1}^{N_k} S_0^{(j)} = \int_{\C_k}\alpha$, which
gives~\eqref{equ:action}.  \end{demo}

Actually, it is also possible to describe the second term $A_{k,1}$;
in the pseudodifferential case, if $P$ has no subprincipal symbol,
$A_{k,1}=\mu\frac{\pi}{2}$ where $\mu$ is the (Keller-Leray-)Maslov
index of $\C_k$ which, in $\RM^2$, is always equal to $2$. In the
Berezin-Toeplitz case, see~\cite{charles-bs}. In the special case of
the Schrödinger operator, an iterative scheme to obtain $A_{k,j}$ for
all $j$ is known~\cite{colin-bs, argyres65}.

\section{Quasi-modes and eigenvalues} Let us now upgrade the
microlocal result of Proposition~\ref{prop:dim1} to a concrete
statement about honest eigenvalues and eigenfunctions, in order to
fully justify the EBK rule.

First of all, as expected, the generalized WKB construction will give
us good quasimodes, \emph{i.e.} solutions $(\Psi_\h, E(\h))$ to \[
\norm{(P-E(\h))\Psi_\h}_{\mathcal{H}} = \ohb\,.  \] But we have much
more: exact eigenfunctions must be $\ohb$-close to these quasimodes,
and hence the whole spectrum in $\interval{E_1}{E_2}$ is $\ohb$-close
(in every possible sense, including multiplicity) to the set of $E$'s
for which the microlocal WKB construction was valid.

Most treatments of the Bohr-Sommerfeld rule in the literature make the
simplifying assumption that energy level sets are connected\footnote{A
notable exception is the recent paper~\cite{deleporte-lefloch25}.};
this is quite understandable, since the presentation becomes much
simpler, and adding connected components is ``well-known to the
experts''. From the point of view of symplectic geometry, this makes
no difference indeed, since each component can be treated
separately\footnote{Actually, there is a topological subtlety, which
is never discussed to my knowledge, concerning a distinction between
the global fibration by $H$ and the semiglobal one --- \emph{i.e.}  in
a neighborhood of a connected component: if $H^{-1}(E)$ is assumed to
be compact, then it is a union of circles, and near any circle $\C$,
the action-angle theorem asserts in particular that neighboring fibers
are also compact and connected\ldots which means: \emph{in a saturated
neighborhood of $\C$}; but this does \emph{not} imply that neighboring
global fibers of $H:M\to \RM$ are compact.}. The microlocal treatment
is equally similar. However, proving that WKB solutions exhaust the
spectrum (which is the goal of this section) requires an additional
argument, which we present here. One way to achieve this (and again,
this is known to experts, I don't claim much originality) is to use
the fact that quasimodes associated to different components are
mutually orthogonal modulo $\ohb$~\cite[Proposition 2.6]{charles-bs}.

One of the first mathematical accounts of the Bohr-Sommerfeld rule is
due to Helffer and Robert~\cite{helffer-robert2}; interestingly,
motivated by quantum tunneling, they did consider multiple components,
but with a symmetry assumption implying that all periods of the
Hamiltonian flow on $H^{-1}(E)$ are equal, allowing them to apply an
idea of Colin de Verdière~\cite{colin-bica}.

\begin{theo}[Bohr-Sommerfeld eigenvalues]\label{theo:bs} Let $P$ be a
self-adjoint semiclassical operator with one degree of freedom and
spectrum $\sigma(P)$. Consider a spectral window $I=[E_1,E_2]$ which
contains only regular values of the principal symbol $H$ of
$P$. Assume moreover that $H^{-1}(I+\interval{-\epsilon}{\epsilon})$
is compact in the phase space $M$, for some $\epsilon>0$. Then
$\sigma(P)\cap I$ is discrete, and is described as follows.

  \begin{enumerate} \item There exists an integer $d\geq 0$ such that
for each $E\in I$, the energy level set $H^{-1}(E)$ is the disjoint
union of $d$ smooth curves $\C_1(E),\dots, \C_d(E)$.  \item There
exist smooth functions $E\mapsto A_{k,j}(E)$, for $k=1,\dots,d$ and
$j\in\NM$, and smooth functions $I\times\interval[open left]01 \ni
(E,\h)\mapsto A_k(E;\h)$ admitting the asymptotic expansion \[
A_k(E;\h) \sim \frac{1}{\h}\sum_{j=0}^\infty \h^j A_{k,j}(E) \] in the
smooth topology, such that $\sigma(P)\cap I$ coincides modulo $\ohb$,
and including multiplicities, with the disjoint union
$\bigsqcup_{k=1}^d \sigma_k(\h)$, which we denote
by \begin{equation} \label{equ:bijection} \sigma(P)\cap I =
\bigsqcup_{k=1}^d \sigma_k(\h) + \ohb \end{equation}
with \begin{equation} \label{equ:sigmak} \sigma_k(\h) :=
A_k(\,\cdot\,;\h)^{-1}(2\pi \ZM) \,.  \end{equation} 
precisely, for each eigenvalue $E=E(\h)$ of $P$, for any 
the spectral multiplicity of the interval %
$B=\interval{E-\h^N}{E+\h^N}$ is, for $\h$ small enough, equal to %
the number of non-empty sets $\sigma_k \cap B$.  \item $A_{k,0}(E) =
\int_{\C_k(E)} \alpha$, where $\alpha$ is the Liouville 1-form, and
the map $E\to A_{k,0}(E)$ is a diffeomorphism from $I$ to its
image.  \item In particular, if $d=1$, then the spectrum in $I$ is
simple for $\h$ small enough, and gaps are of order $\O(\h)$.  \item
If $P$ has no subprincipal symbol, then $A_{k,1}$ is the Keller-Maslov
index of $\C_k$.  \end{enumerate} \end{theo}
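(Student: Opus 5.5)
The plan is to assemble the microlocal results above into a global spectral statement in four steps.

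\textbf{Step 1: geometry, items 1 and 3.} Since $H$ has no critical values in $I+[-\epsilon,\epsilon]$ and $H^{-1}$ of that set is compact, $H$ restricted to it is a proper submersion. Ehresmann's theorem then makes it a trivial fibration over the interval. Each fiber is a compact one-dimensional manifold, hence a disjoint union of $d$ circles, with $d$ independent of $E$; this gives item 1 with smooth families $\C_k(E)$.

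The formula $A_{k,0}(E)=\int_{\C_k(E)}\alpha$ is Proposition~\ref{prop:semiclassical-action}. For the diffeomorphism claim, I will use the standard identity $\frac{d}{dE}\int_{\C_k(E)}\alpha = T_k(E)$, the period of the Hamiltonian flow on $\C_k(E)$. This follows from Stokes' theorem applied to the annulus between $\C_k(E)$ and $\C_k(E')$, together with the coarea formula $\omega = \dd H\wedge \dd t$. Since $T_k>0$, the map $E\mapsto A_{k,0}(E)$ is a diffeomorphism onto its image.

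\textbf{Step 2: quasimodes and orthogonality.} Choose the covers $\Omega_j$ and the local WKB solutions of Proposition~\ref{prop:dim1} uniformly in $E$ near any $E_0\in I$. Smooth dependence on $E$ makes the cocycle $C_j(E,\h)$ smooth in $E$. Then $A_k(E;\h)=\frac1i\log\prod_j C_j$ is smooth, with the expansion of Proposition~\ref{prop:semiclassical-action} holding in the smooth topology. Patching over $E$ uses a partition of unity in $E$; the branch of the logarithm is fixed by continuity.

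By Proposition~\ref{prop:bs}, for $E\in\sigma_k(\h)$ we obtain a microlocal solution near $\C_k$. Cutting it off microlocally gives a normalized global quasimode $\Psi^k_\h$ with $(P-E)\Psi^k_\h=\ohb$.

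To deal with near-coincidences, I will group the points of $\bigsqcup_k\sigma_k$ lying in a window $B=[E-\h^N,E+\h^N]$. Quasimodes from distinct components have disjoint microsupports, so they are $\ohb$-orthogonal, as in \cite[Proposition 2.6]{charles-bs}. The spectral theorem for self-adjoint $P$ then shows that the spectral projector of a slightly enlarged window has rank at least the number of $k$ with $\sigma_k\cap B\neq\emptyset$.

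\textbf{Step 3: exhaustion, the main obstacle.} I need the reverse inequality, in a form uniform with respect to clusters and $N$. Take an eigenfunction $u$ with eigenvalue $E(\h)\in I$. By ellipticity, $u$ is microsupported in a $\h^{N}$-neighborhood of $H^{-1}(E)$ modulo $\ohb$. Split $u=\sum_k \chi_k u$ with microlocal cutoffs $\chi_k$ around $\C_k$.

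Each nonzero piece $\chi_k u$ is a microlocal solution near $\C_k$. By Proposition~\ref{prop:bs} it forces $A_k(E(\h);\h)\in 2\pi\ZM+\ohb$, and Step 1 lets us invert this to get $\operatorname{dist}(E(\h),\sigma_k)=\ohb$, because $\partial_E A_k\sim T_k/\h$.

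For multiplicities, I use Proposition~\ref{prop:dim1} again. Any eigenfunction in the cluster window is, near $\C_k$, $\ohb$-proportional to the WKB quasimode $\Psi^k$ evaluated at the nearby Bohr--Sommerfeld energy. Here the smooth $E$-dependence with locally uniform remainders lets one pass from exact $E$ to $E+\O(\h^\infty)$. Hence the whole spectral subspace of the window embeds, modulo $\ohb$, into the span of those $\Psi^k$, which bounds its dimension. The delicate point is making "for $\h$ small enough" uniform over all eigenvalues in $I$. I will handle this by compactness of $I$ and the uniformity of Proposition~\ref{prop:dim1}, arguing by contradiction on sequences $\h_n\to0$.

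\textbf{Step 4: remaining items.} Discreteness of $\sigma(P)\cap I$ follows from the counting above. Item 4 follows because, when $d=1$, $\sigma_1$ is the preimage of $2\pi\ZM$ under a function with derivative $\approx T/\h$. Its points are therefore simple and separated by roughly $2\pi\h/T$, and the multiplicity count gives simplicity of the spectrum. Item 5 is the Maslov computation quoted after Proposition~\ref{prop:semiclassical-action}, via the usual normal form near turning points.
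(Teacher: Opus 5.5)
Your architecture is the paper's. You build quasimodes from the cocycle condition of Proposition~\ref{prop:bs}, use almost-orthogonality of quasimodes carried by distinct components for the lower bound on multiplicities, and decompose eigenfunctions microlocally along the $\C_k$ for exhaustion and the upper bound. Your Ehresmann and Stokes arguments for items 1 and 3 fill in what the paper calls classical.

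The step that fails is in Step 4, where you say discreteness of $\sigma(P)\cap I$ ``follows from the counting above''. That counting only handles \emph{eigenfunctions}, i.e.\ exact solutions of $(P-E)u=0$, to which the $\mathcal{O}(\hbar^\infty)$ microlocal theory applies. It is blind to any continuous spectrum in $I$. Moreover, you pass from ``every eigenfunction in the window is $\mathcal{O}(\hbar^\infty)$-close to the span of the $\Psi^k$'' to ``the whole spectral subspace of the window embeds into that span''. That passage already assumes $\mathrm{Ran}\,1_{\tilde B}(P)$ is spanned by eigenfunctions, i.e.\ discreteness. A general element of $\mathrm{Ran}\,1_{\tilde B}(P)$, for a window of width $\hbar^N$, is only an $\mathcal{O}(\hbar^N)$-quasimode. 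Proposition~\ref{prop:dim1}, which is stated modulo $\mathcal{O}(\hbar^\infty)$, does not apply to it directly. So in the pseudodifferential case your argument is circular. (For Berezin--Toeplitz operators on a compact $M$, $\mathcal{H}$ is finite-dimensional and there is nothing to prove.)

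The paper establishes discreteness first and independently. Choose $f\in C_c^\infty$ with $f=1$ on $I$ and $\operatorname{supp} f\subset I+(-\epsilon,\epsilon)$. By the pseudodifferential functional calculus, $f(P)$ is compact, since its symbol is concentrated on the compact set $H^{-1}(I+[-\epsilon,\epsilon])$. Hence $1_I(P)=1_I(P)f(P)$ is compact and has finite rank. With this in place your Step 3 is legitimate. Alternatively, you could rerun Step 3 on normalized $\mathcal{O}(\hbar^\infty)$-quasimodes, e.g.\ $d+1$ orthonormal vectors in $\mathrm{Ran}\,1_J(P)$ with $|J|\le e^{-1/\hbar}$, to show every such rank is at most $d$; but that has to be argued, not asserted.

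There are two minor corrections. First, the hypotheses give regular values only on $I$, not on $I+[-\epsilon,\epsilon]$. Before invoking Ehresmann you must shrink $\epsilon$. This is possible because the critical values of $H$ on the compact set $H^{-1}(I+[-\epsilon,\epsilon])$ form a closed set disjoint from $I$. Second, ``microsupported in an $\hbar^N$-neighborhood of $H^{-1}(E)$'' is not a valid phase-space statement for $N\ge 1$, since it is below the uncertainty scale. What is true, and all you actually use, is that $u$ is microsupported in $H^{-1}(E_0)$ for each limit point $E_0$ of $E(\hbar)$, which is how the paper phrases Step 3.
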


\begin{rema}\label{rema:sloppy} We will be sloppy here on two
technical points: \begin{enumerate} \item The precise definition of
``two subsets of $\RM$ depending on $\h$ which coincide modulo $\ohb$
including multiplicity'': see~\cite[Definition
1.5]{san-fahs-letreust-raymond}. This notion is natural but some care
has to be taken at the ``boundary'' of the sets. See Step 4 of the
proof below.  \item The precise definition of the Keller-Maslov index,
both in the pseudodifferential~\cite{duistermaat-oscillatory} (see
also~\cite[Section 5.13]{guillemin-sternberg-semiclassical}) and the
Berezin-Toeplitz~\cite{charles-bs}
cases.  \end{enumerate} \end{rema} \begin{demo} \paragraph{Step
1. Discrete spectrum.} This step is only necessary in the
pseudodifferential case, where the phase space $M$ is not compact, and
it is now standard.  Let a smooth function $f$ be equal to 1 on $I$
and compactly supported in $I+\interval{-\epsilon}{\epsilon}$. Then
$1_I f = 1_I$ and hence by functional calculus $1_I(P)= 1_I(P)
f(P)$. The symbol $f(H(x,\xi))$ and all its derivatives vanish outside
the compact set $H^{-1}(I+\interval{-\epsilon}{\epsilon})$, so by
pseudodifferential functional calculus, $f(P)$ is compact
(see~\cite[Theorem 8.7, Theorem 9.4]{dimassi-sjostrand}, and
also~\cite[Section 13.6]{guillemin-sternberg-semiclassical}).  Thus
$1_I(P)$ is compact as well, which implies that its range is finite
dimensional.

  \paragraph{Step 2. Quasimodes.} Let us fix $k\in \{1,\dots, d\}$. We
  define the map $A_k$ to be the semiclassical action of
  Proposition~\ref{prop:semiclassical-action}.

  Thus, the map $E\to A_{k,0}(E)$ is the action integral on the energy
  set $H=E$; a classical computation shows that its derivative is the
  map $E\mapsto \tau_k(E)$ where $\tau_k(E)$ is the period of the
  Hamiltonian flow of $H$ on $\C_k(E)$. Since $E$ is a regular value,
  the Hamiltonian vector field cannot vanish on the level set, and
  hence $\tau_k(E)\neq 0$. This shows that $A_{k,0}$ is a
  diffeomorphism, and hence that the map $\h A_k(\,\cdot\,;\h)$ is
  invertible for $\h$ small enough.  This also implies that the
  inverse $G_k(\h) := [\h A_k(\,\cdot\,;\h)]^{-1}$ admits an
  asymptotic expansion in powers of $\h$. Hence, for convenience, we
  may define this inverse modulo $\ohb$ for all $\h\in \interval[open
  left]01$ by a Borel summation, which we call $G_k(\h)$ again.

  It now follows from~\eqref{equ:sigmak}
  that \begin{equation} \label{equ:n} E(\h) \in \sigma_k(\h) \ssi
  E(\h) = G_k(\h)(2\pi\h n) \qquad \text{for some }
  n\in\ZM \end{equation} (the integer $n$ being of course subjected to
  the fact that $2\pi n$ must belong to the range of $A_k(\,\cdot\,,
  \h)$.)

  For such a family $\{E(\h); \h\in J\}$, where
  $J\subset\interval[open left]01$ accumulates at zero, we may apply
  Proposition~\ref{prop:bs} and obtain a microlocal solution to
  $(P-E)\Psi_\h=\ohb$ in a neighborhood of $\C_k$. By a microlocal
  cutoff outside of $\C_k$ and vanishing on the other $\C_{k'}$'s,
  $k'\neq k$, we obtain a normalized quasimode $\Psi_\h^{(k)}$: \[
  \norm{(P-E)\Psi_\h^{(k)}}_{\mathcal{H}} = \ohb\,.  \] By the
  spectral theorem, there must be an element $\lambda(\h)\in
  \sigma(P)$ such that $\abs{\lambda(\h) - E(\h)} = \ohb$. This
  shows \[ \sigma_k \subset \sigma(P) + \ohb\,.  \]

  \paragraph{Step 3. Microsupport.}

  Let $\psi=\psi_\h\in\mathcal{H}$ be an eigenfunction of $P$ for the
  eigenvalue $E=E(\h)\in I$. Assume that $E(\h)\to E_0$, at least for
  some subsequence of values of $\h$. By semiclassical ellipticity,
  the microsupport of $\psi$ is contained in $H^{-1}(E_0)$, and is not
  empty if we normalize $\norm{\psi}=1$. This microsupport is
  invariant by the Hamiltonian flow of $H$ (this follows for instance
  from Proposition~\ref{prop:dim1}). Hence it is a finite union of
  curves $\C_k(E_0)$. Let $W_k$ be a saturated neighborhood of
  $\C_k(E_0)$: for $E$ close to $E_0$, the whole curve $\C_{E}$ is
  contained in $W_k$.

  Now, the microlocal restriction of $\psi$ on $W_k$ is a section of
  the sheaf $\mathcal{D}$ for all $E$ near $E_0$, and hence by
  Proposition~\ref{prop:bs} the corresponding Bohr-Sommerfeld
  conditions must be satisfied: thus $E\in \sigma_k(\h)$. This
  shows \[ \sigma(P) \subset \sigma_k + \ohb\,.  \]

\paragraph{Step 4. Multiplicities.}  For simplicity, we did not state
explicitly the result about multiplicities within the theorem, see
Remark~\ref{rema:sloppy}. Here is a more precise statement, which we
could not find in the literature (although it is not very far from the
method employed in~\cite{helffer-sjostrand}), except very recently in
the Berezin-Toeplitz case~\cite{deleporte-lefloch25}. While the idea
is simple (the multiplicity of $E$ should be exactly the number of $k$
for which $E\in \sigma_k$), the correct statement is more involved due
to boundary effects (eigenvalues may surreptitiously enter or leave
any given set when $\h$ varies arbitrarily little, and this may not be
detected because of the $\ohb$ error).

Let us prove the following:

\begin{quote} For each eigenvalue $E=E(\h)\in I$ of $P$, let $N\geq
2$, let $B=B(\h)$ be the ball around $E(\h)$, of radius $\h^N$.  Let
$N'>N$ and let $\tilde B$ be a slightly smaller ball, or radius $\h^N
- \h^{N'}$. Then, when $\h$ is small enough, \begin{enumerate} \item
the spectral multiplicity of $B$ is at least equal to the number of
$k$'s such that $\sigma_k$ intersects $\tilde B$; \item the spectral
multiplicity of $\tilde B$ is at most equal to the number of $k$'s
such that $\sigma_k$ intersects $B$.  \end{enumerate} \end{quote}

Here, we used the terminology ``spectral multiplicity of B'' to denote
the rank of the spectral projector of $P$ onto $B$.  First, remark
that in each $\sigma_k$, the discussion is simple: if two energies
$E_1(\h)$ and $E_2(\h)$ are separated by some $Ch^N$, $N\geq 1$, then
they must correspond to different integers $n_1$ and $n_2$
in~\eqref{equ:n}: hence $\abs{n_1(\h) - n_2(\h)}\geq 1$, and hence \[
\abs{E_1(\h) - E_2(\h) } \geq c \h \] for some $c>0$.

Let $B=B(\h)$ be a ball around some energy $E(\h)\in I$, of radius
$\h^N$ with $N\geq 2$, and let $\tilde B$ be a slightly smaller ball
as in the statement, so that $\tilde B + \ohb \subset B$. By the above
argument, for $\h$ small enough, each set $\sigma_k+\ohb$ can contain
at most one element of $\tilde B$. If this happens for two different
integers $k\neq\ell$, then the corresponding WKB quasimodes are
microsupported on different curves and hence their scalar product is
$\ohb$. Therefore they cannot be collinear for $\h$ small enough,
which means, using the variational characterisation of the spectrum,
that the spectral multiplicity of $B$ is at least equal to the number
of $k$'s such that $\sigma_k$ intersects $\tilde B$.

Conversely, as in the previous step, any eigenfunction in the range of
$1_{\tilde B}(P)$ must be supported on some union of $\C_k$'s and
hence is microlocally equal to a linear combination of the
corresponding WKB quasimodes. Since we have an orthonormal basis of
eigenfunctions, and since, again, the WKB quasimodes are mutually
almost orthogonal, we deduce that the spectral multiplicity cannot
exceed this number of $k$'s.

\end{demo}

We see from the proof that one actually has a precise description of
eigenfunctions and quasimodes.

\begin{theo}[Bohr-Sommerfeld eigenfunctions] With the same hypothesis
as Theorem~\ref{theo:bs}, \begin{enumerate} \item Any quasimode of $P$
(and hence, any eigenfunction) is equal, modulo $\ohb$, to a linear
combination of the WKB solutions.  \item Each of these WKB solutions,
with approximate eigenvalue $E=E(\h)$, must correspond to a component
$\C_k$ such that $\sigma_k$ intersects an $\ohb$ ball around
$E$.  \item For each $k$, to any solution $E(\h)$ of the
Bohr-Sommerfeld condition~\eqref{equ:bs} on the curve $\C_k$, one can
associate (for $\h$ small enough) a unique eigenvalue of $P$, for
which the corresponding WKB Ansatz is a
quasimode.  \end{enumerate} \end{theo}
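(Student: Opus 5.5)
The plan is to reread the proof of Theorem~\ref{theo:bs}, adding the bookkeeping needed for eigenfunctions and quasimodes, and to rely throughout on Proposition~\ref{prop:dim1} (local one-dimensionality), Proposition~\ref{prop:bs} (gluing), and the almost-orthogonality of WKB quasimodes supported on different curves.

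\emph{Item 1.} Let $\psi_\h$ be a normalized quasimode, $\norm{(P-E(\h))\psi_\h}=\ohb$ with $E(\h)\in I$. Passing to subsequences, we may assume $E(\h)\to E_0$. By ellipticity (Step~3 of the previous proof), $\psi_\h$ is microsupported in $H^{-1}(E_0)=\bigsqcup_k \C_k(E_0)$. Choose disjoint saturated neighborhoods $W_k$ and a microlocal partition of unity $\chi_k(P)$, with $\chi_k$ functions of $H$ equal to $1$ near $\C_k$. Then $\psi_\h=\sum_k \chi_k\psi_\h+\ohb$. Each term $\chi_k\psi_\h$ is still a microlocal solution on $W_k$, because $\chi_k$ commutes with $P$ if we take it to be a function of $P$ localized via $H$, or at least commutes modulo $\ohb$ near $\C_k$. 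Hence $\chi_k\psi_\h$ is a section of $\mathcal{D}$ over $W_k$. By Proposition~\ref{prop:dim1}, on each $\Omega_j$ it equals $c_j\Psi^{(j)}_\h+\ohb$, and compatibility forces the $c_j$ to form a coboundary. Either all $c_j$ are $\ohb$, so the $k$-term is negligible, or the term is a multiple of the glued WKB solution $\Psi^{(k)}_\h$. This gives the linear combination. The only point to check is uniformity: the constants $c_j$ must be bounded, which follows from $\norm{\psi_\h}=1$ and the normalization of $\Psi^{(j)}_\h$.

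\emph{Item 2.} If the coefficient of $\Psi^{(k)}_\h$ is not $\ohb$ along a subsequence, then the cocycle is a coboundary modulo $\ohb$ after rescaling. Proposition~\ref{prop:bs} then gives $C_1\cdots C_{N_k}=1+\ohb$, so $A_k(E;\h)\in 2\pi\ZM+\ohb$. Because $\partial_E A_k\sim \tau_k(E)/\h\neq 0$, inverting through $G_k$ yields an element of $\sigma_k$ within $\ohb$ of $E$. This step is the main obstacle: we must turn ``the Bohr–Sommerfeld product is $1$ modulo $\ohb$'' into ``$E$ is $\ohb$-close to an exact solution''. The difficulty is the $1/\h$ factor, which costs only a single power of $\h$, and this is absorbed by $\ohb$.

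\emph{Item 3.} For $E(\h)\in\sigma_k$, Step~2 of the previous proof gives the quasimode $\Psi^{(k)}_\h$ and an eigenvalue $\lambda(\h)$ within $\ohb$. For uniqueness, we use the multiplicity count of Step~4 with radii $\h^N$. Any eigenvalue in the ball has, by item~1, an eigenfunction that is a combination of WKB solutions on components $\ell$ with $\sigma_\ell$ meeting the ball. Generically (and in every case, for $\h$ small along the relevant sequence) the component $k$ contributes exactly one dimension, because points of $\sigma_k$ are $c\h$-separated and quasimodes on distinct curves are almost orthogonal. So there is one eigenvalue associated with $E(\h)$ whose eigenfunction carries the $\Psi^{(k)}_\h$ component. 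When several $\sigma_\ell$ meet the ball, the association is made via the spectral projector onto the cluster: $\Psi^{(k)}_\h$ projects almost isometrically into it. We then pick the eigenvalue by matching dimension counts, so ``unique'' means unique up to this bookkeeping inside the $\ohb$ cluster.
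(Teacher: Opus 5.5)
Your proposal is correct and follows the paper's route. The paper gives no separate proof; it only observes that this description of quasimodes and eigenfunctions can be read off from Steps 2--4 of the proof of Theorem~\ref{theo:bs}, which is exactly the bookkeeping you carry out with Propositions~\ref{prop:dim1} and~\ref{prop:bs} and almost-orthogonality. Your reading of ``unique'' in item~3 as multiplicity matching inside an $\ohb$ cluster is the right one when several $\sigma_\ell$ meet the same ball, as in the symmetric case. One small fix: the cutoffs cannot be functions $\chi_k(P)$ of the operator, because these cannot separate components of the same level set. Instead, use a microlocal cutoff whose symbol is $f\circ H$ on the saturated component $W_k$ and $0$ elsewhere, with $f=1$ near $E_0$. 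Its commutator with $P$, applied to $\psi_\h$, is $\ohb$ because this symbol is locally constant near $H^{-1}(E_0)$.
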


\section{Caustics?}

Comparing with most texts on WKB (quasi)modes,
like~\cite{Hall2013-wkb}, it is perhaps surprising that no discussion
of turning points or introduction of the Airy function has been
necessary here. Actually, in our treatment, caustics have been
superbly ignored. What has become of them?

As we saw in the preceding sections, WKB solutions live intrinsically
on the curves $\C_k$, which are smooth lagrangian submanifolds of the
phase space $M$. In the usual pseudodifferential setting, a problem
occurs due to the fact that we ultimately need eigenfunctions that
depend on the position variable only, $x$; and in general the curves
$\C_k$ cannot be globally parameterized by $x$.

\begin{defi} Let $\Lambda\subset\RM^{2n}$ be a lagrangian manifold. A
point $(x,\xi)\in\Lambda$ is called \emph{caustic} if the map
$(x,\xi)\to x$ restricted to $\Lambda$ is \emph{not} a local
diffeomorphism.  \end{defi}

Thus, if $n=1$, any closed smooth curve $\C$ in $\RM^2$ must have at
least two caustics, corresponding to the extrema of $x$ on the
curve. The \emph{Maslov index} of $\C$ is the intersection number of
the curve with the vertical fibration: \emph{i.e.}, the number of
times the tangent to the curve crosses the vertical direction, counted
algebraically: $+1$ if the oriented curve stays locally on the left of
the vertical tangent. For a circle in $\RM^2$ oriented
counter-clockwise, the Maslov index is 2.  By homotopy invariance, the
Maslov index of any smooth loop in $\RM^2$ is $\pm 2$.

\begin{rema} In the Berezin-Toeplitz category, the caustic issue does
not exist, because elements of the Hilbert space $\mathcal{H}$ are
directly defined on the phase space $M$. As a matter of fact, we could
have avoided caustics altogether by transforming pseudodifferential
operators into Berezin-Toeplitz operators (in the non-compact phase
space $\CM^n$) using the FBI transform. This is not our method here,
but it can be done efficiently,
see~\cite{rouby-17,duraffour2025-bs}.  \end{rema} Symplectically, due
to the Darboux-Carathéodory theorem, any regular point of $H^{-1}(E)$
plays the same role, be it in $\RM^{2n}$ or in a compact Kähler
manifold, be it caustic or not. Since our method is to quantize the
Darboux-Carathéodory charts, the microlocal description completely
flies over the caustics. But, looking carefully, they are still there:
they influence the dimension of the $\theta$ variable in the
generalized WKB Ansatz~\eqref{equ:lagrangian} used for the kernel of
the Fourier Integral Operator quantizing the Darboux-Carathéodory
chart. By stationary phase, this dimension shows up in the asymptotic
expansion of the transition constant between two WKB solutions, which
is our Bohr-Sommerfeld cocycle~\eqref{equ:Cj-das}, see the proof of
Proposition~\ref{prop:semiclassical-action}. Then, in the final
result, Theorem~\ref{theo:bs}, the only remaining footprint of the
caustic analysis is the Maslov index, causing the famous
$\frac{1}{2}$-shift in the quantum numbers.

\section{Applications}

The description of Theorem~\ref{theo:bs} is so precise that almost any
asymptotic statement about eigenvalues of $P$ in
$I=\interval{E_1}{E_2}$ should directly follow from it. Here are some
examples, certainly known to experts, but which we could not find
explicitly in the literature. The hypothesis are the same as those of
the theorem.

\begin{coro}[Density of the spectrum] Given any $E_0\in I$, there
exists a family of eigenvalues $E(\h)\in \sigma(P)$ such that
$E(\h)\to E_0$, and more precisely $ E(\h) - E_0 =
\O(\h)$.  \end{coro}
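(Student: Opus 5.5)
The proof will be read off from Theorem~\ref{theo:bs}, applied to a single component. When $d=0$ the level sets over $I$ are empty, so the statement presupposes $d\geq 1$ (otherwise $\sigma(P)\cap I$ is empty by ellipticity). Fix $E_0\in I$ and one index $k$, say $k=1$. The plan is to find an element of $\sigma_1(\h)$ that lies within $\O(\h)$ of $E_0$, and then use the inclusion $\sigma_1\subset\sigma(P)+\ohb$ from Step~2 of the proof of Theorem~\ref{theo:bs} to produce a true eigenvalue.

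\textbf{Choosing the quantum number.} By item~3 of the theorem, $A_{1,0}$ is a diffeomorphism from $I$ onto its image, and $\abs{A_{1,0}'}=\abs{\tau_1}\geq c>0$ on $I$. The function $F_\h(E):=\h A_1(E;\h)=A_{1,0}(E)+\h A_{1,1}(E)+\O(\h^2)$ is therefore, uniformly in $\h$, a diffeomorphism with derivative bounded below by $c/2$ for small $\h$. Pick $n=n(\h)\in\ZM$ that minimizes $\abs{2\pi\h n-F_\h(E_0)}$, so that this distance is at most $\pi\h$.

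\textbf{Inverting.} Set $E(\h):=G_1(\h)(2\pi\h n)$, with the inverse $G_1$ from the proof of the theorem. By the mean value theorem applied to $G_1(\h)$, whose derivative is bounded by $2/c$, we get $\abs{E(\h)-E_0}\leq (2/c)\,\pi\h=\O(\h)$. By~\eqref{equ:n}, $E(\h)\in\sigma_1(\h)$.

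\textbf{Boundary issue.} This is the main obstacle: $2\pi\h n$ must lie in the range $F_\h(I)$. If $E_0$ is interior to $I$, this holds for small $\h$, because $F_\h(I)$ contains a fixed neighborhood of $F_\h(E_0)$. If $E_0$ is an endpoint, the nearest lattice point may fall outside $F_\h(I)$. In that case I would choose the nearest lattice point on the correct side, which costs at most $2\pi\h$. Alternatively, I would note that the hypotheses of the theorem also hold on the slightly larger window $I+\interval{-\epsilon/2}{\epsilon/2}$: regularity is an open condition and compactness is inherited. Working in the larger window, the resulting eigenvalue still satisfies $\abs{E(\h)-E_0}=\O(\h)$, though it might lie just outside $I$. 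If membership in $I$ is required, the one-sided choice of $n$ handles it.

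\textbf{Conclusion.} Step~2 of the proof of Theorem~\ref{theo:bs} provides $\lambda(\h)\in\sigma(P)$ with $\abs{\lambda(\h)-E(\h)}=\ohb$. Hence $\lambda(\h)-E_0=\O(\h)$, and in particular $\lambda(\h)\to E_0$.
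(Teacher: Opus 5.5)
Your proposal is correct and follows the paper's proof essentially step for step. You choose a quantum number $n(\h)$ with $2\pi\h n$ within $\O(\h)$ of $\h A_k(E_0;\h)$, apply the inverse $G_k(\h)$ (Lipschitz uniformly in $\h$), and invoke $\sigma_k\subset\sigma(P)+\ohb$ from Theorem~\ref{theo:bs} to get a true eigenvalue. Your extra care about $d\geq 1$ and about $E_0$ being an endpoint of $I$ is a genuine, if minor, tightening of the paper's argument: there the choice $n(\h)=\lfloor \frac{c(\h)}{2\pi\h}\rfloor$ can place $2\pi\h n$ outside $\h A_k(I;\h)$.
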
 \begin{demo} Let us apply
Formula~\eqref{equ:sigmak}. Let $c(\h) = \h A_k (E_0; \h)$. We have \[
c(\h) = c_0 + \O(\h)\qquad c_0 = A_{k,0}(E_0)\,, \] and
$G_k(\h)(c(\h)) = E_0 + \ohb$. Let $n(\h) = \lfloor
\frac{c(\h)}{2\pi\h} \rfloor \in \ZM$, so that \[ \abs{2\pi n(\h)\h -
c(\h)} \leq 2\pi\h.  \] Since $G_k(\h)$ is locally Lipschitz,
uniformly in $\h$, we obtain \[ G_k(\h)(2\pi n(\h)\h) - G_k(\h)(c(\h))
= \O(\h) \] and by~\eqref{equ:sigmak} we obtain $E(\h)\in\sigma(P)$
with \[ G_k(\h)(2\pi n(\h)\h) = E(\h) + \ohb, \] which gives the
result.  \end{demo}

In the above result, we notice that the quantum number $n(\h)$ which
labels the eigenvalues is allowed to depend on $\h$, and the distance
$\abs{E(\h)-E_0}$ cannot in general be better than $\O(\h)$. A
different question is to try to investigate the behaviour, as $\h$
varies, of an eigenvalue with a given, fixed, $n$. This is what
concerns the following statement.

\begin{coro}[$\h$-behaviour of individual
eigenvalues]\label{coro:drift} All eigenvalues of $P$ in $I$ «depend
smoothly on $\h$ modulo $\ohb$» in the following sense: for any
$\varepsilon>0$ and any $N\geq 2$, there exists $\h_1>0$ such that the
following holds.  Let $\h_0\in \interval[open left]0{\h_1}$, and
consider an eigenvalue $E(\h_0)\in \sigma(P)\cap
\interval{E_1}{E_2}$. There exist a family of eigenvalues
$\{E(\h),\,\h \in \interval[open left]01\}$, and a smooth map $\h
\mapsto \lambda(\h)$ such that, as long as $\lambda(\h)\in I$, \[
\abs{ E(\h) - \lambda(\h)} \leq \varepsilon \h^N \,.  \] In the
pseudodifferential case, this family always leaves the interval $I$
for $\h$ small enough.  \end{coro}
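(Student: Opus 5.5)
The plan is to read everything off the description \eqref{equ:sigmak} of Theorem~\ref{theo:bs}, in the form \eqref{equ:n}. Given $\h_0$ and an eigenvalue $E(\h_0)\in I$, the theorem (with the multiplicity statement of Step~4) provides an index $k$ and an integer $n_0$ such that
\[
  \abs{E(\h_0) - G_k(\h_0)(2\pi\h_0 n_0)} \le \tfrac{\varepsilon}{2}\,\h_0^N ,
\]
provided $\h_0<\h_1$ with $\h_1$ small. The $\ohb$ in Theorem~\ref{theo:bs} is uniform over $I$, so for fixed $N$ and $\varepsilon$ there is a single $\h_1$ such that, for all $\h<\h_1$, every point of $\sigma_k(\h)$ is $\tfrac{\varepsilon}{2}\h^N$-close to an eigenvalue, and conversely.

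We then freeze the quantum number and set
\[
  \lambda(\h) := G_k(\h)(2\pi\h n_0).
\]
This map is smooth in $\h$. Indeed, $G_k(\h)$ was defined as the inverse of $\h A_k(\cdot;\h)$, which is jointly smooth in $(E,\h)$ with an asymptotic expansion in the smooth topology. The inverse function theorem, applicable because $A_{k,0}'=\tau_k\neq0$, gives joint smoothness of $(c,\h)\mapsto G_k(\h)(c)$ for $c$ in the range. After Borel summation, the joint smoothness in $(c,\h)$ should be checked rather than just the expansion; this is a routine but necessary point. By \eqref{equ:n}, $\lambda(\h)\in\sigma_k(\h)$ for each $\h$. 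Hence, as long as $\lambda(\h)\in I$ and $\h<\h_1$, Step~2 of the proof of Theorem~\ref{theo:bs} gives an eigenvalue $E(\h)$ with $\abs{E(\h)-\lambda(\h)}\le\varepsilon\h^N$. For $\h\ge\h_1$ the family $E(\h)$ may be chosen arbitrarily, or the statement is read for $\h<\h_1$; we choose $E(\h_0)$ itself at $\h=\h_0$, which is consistent with the estimate.

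The main obstacle is the uniformity. We need the constant in ``$\sigma_k\subset\sigma(P)+\ohb$'' to be uniform with respect to the energy family, not only along a given family $E(\h)$. I would obtain this by a contradiction argument over sequences $\h_j\to0$: Step~2 applies to any family, so a failure of uniformity along a sequence contradicts it. Alternatively, it follows from the uniformity in $E$ stated in Proposition~\ref{prop:dim1}. The same uniformity is needed for the converse inclusion used to produce $n_0$ from $E(\h_0)$.

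For the final claim in the pseudodifferential case, we expand $2\pi\h n_0 = \h A_k(\lambda(\h);\h) = A_{k,0}(\lambda(\h))+\O(\h)$. As $\h\to0$ with $n_0$ fixed, the left-hand side tends to $0$. So if $\lambda(\h)$ stayed in $I$, we would have $A_{k,0}(\lambda)\to0$, i.e.\ the action $\int_{\C_k}\alpha$ would approach $0$ somewhere on $I$. In $\RM^2$, $\int_{\C_k}\alpha$ is $\pm$ the enclosed area (Stokes), which is bounded below by a positive constant on the compact $I$. Hence $\lambda(\h)$ must leave $I$ for $\h$ small, and so does the family $E(\h)$.
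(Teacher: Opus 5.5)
Your proposal is correct and follows the paper's proof: fix $(k,n)$ from the decomposition~\eqref{equ:bijection} at $\h=\h_0$, set $\lambda(\h):=G_k(\h)(2\pi\h n)$, and use that the action $A_{k,0}$ (up to sign, the enclosed area) does not vanish on $I$ to force the branch out of $I$. Your contradiction argument for a uniform $\h_1$ and your inverse-function argument for the smoothness of $\lambda$ make explicit what the paper leaves implicit in ``the result follows''.
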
 \begin{demo} We use the bijection
given by~\eqref{equ:bijection}. This defines, for $\h=\h_0$, an
integer $k\in\{1,\dots,d\}$, labelling a connected component of the
energy level set, and another integer $n\in\ZM$ (the quantum number)
labelling the approximate eigenvalue $A_k(\,\cdot\,; \h_0)^{-1}(2\pi
n)$. We let $\lambda(\h) := G_k(\h)(2\pi\h n)$ as in~\eqref{equ:n},
and the result follows. In other words, we just
reformulate~\eqref{equ:bijection} as \[ \sigma(P)\cap I =
\hspace{-3em}\bigsqcup_{k=1,\dots,d \atop n\in \ZM,\ 2\pi\h n\in
\textup{Dom}(G_k(\h))}\hspace{-3em}G_k(\h)(2\pi\h n) + \ohb \] which
expresses the spectrum as a union of $\h$-smooth branches indexed by
$(k,n)$.

  Note that $G_k(\h) = G_{k,0} + \O(\h)$, and $G_{k,0}$ is the
reciprocal of the action variable $E\mapsto A_{k,0}(E)$. In the
pseudodifferential case, the action is the area below $\C_k$ and hence
cannot vanish unless $E$ is a critical value. This means that
$G_{k,0}(0)$ cannot belong to $I$, which forces the eigenvalue to exit
this interval.  \end{demo}

Note that the \emph{drifting} of eigenvalues, forcing them to exit any
interval of regular values, was described for general quantum
integrable systems in~\cite{san-dauge-hall-rotation}.

From this corollary, all eigenvalues belong to smooth branches. In
general, the branches associated to different connected components may
intersect. Whether the exact eigenvalues follow the branches as $\h$
moves continuously (in the pseudodifferential setting), or instead
``hop'' to another branch, creating \emph{avoiding crossing} seems to
be an open question for general pseudodifferential operators. Of
course, for 1D Schrödinger operators, because of the simplicity of the
spectrum, we know that all crossings must be avoided.

\begin{coro}[Poor man's tunnel effect] Assume that two connected
components $\C_k$, $\C_\ell$ are ``quantum symmetric'' (for instance
they are exchanged by an affine symplectic map and $P$ is invariant
under the corresponding metaplectic transformation, or $P$ is a
Schrödinger operator with symmetric potential wells). Then we have
doublets of eigenvalues $E_k(\h)$ and $E_\ell(\h)$ such
that \begin{enumerate} \item $E_k(\h) = E_\ell(\h) + \ohb$; \item the
spectral multiplicity of any ball of radius $\h^N$, $N\geq 2$ around
$E_k(\h)$ (or $E_\ell(\h)$) is at least 2. It is exactly 2 if
$d=2$.  \end{enumerate} \end{coro}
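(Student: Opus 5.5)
The plan is to reduce everything to a comparison of the two semiclassical actions $A_k$ and $A_\ell$, and then read off both items from Theorem~\ref{theo:bs}.

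\textbf{Step 1: the actions coincide.} Let $\chi$ be the affine symplectic map exchanging the two components, with $\chi(\C_k(E))=\C_\ell(E)$ for every $E\in I$ (this holds because $H\circ\chi=H$). Let $T$ be the metaplectic operator quantizing $\chi$, so that $TP=PT$. If $\Psi^{(j)}_\h$ is a local WKB generator of $\mathcal{D}$ on $\Omega_j\subset$ neighborhood of $\C_k$, then $T\Psi^{(j)}_\h$ is a microlocal solution of $(P-E)\psi=\ohb$ on $\chi(\Omega_j)$. By Egorov and the FIO calculus, it is again a WKB function microsupported on $\C_\ell$. The images $\chi(\Omega_j)$ form an admissible cover of $\C_\ell$, ordered along the orientation given by $\ham{H}$, since $\chi$ is symplectic and preserves $H$. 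Applying the unitary $T$ to~\eqref{equ:cocycle} gives the same constants $C_j$. Hence the Bohr-Sommerfeld cocycles of $\C_k$ and $\C_\ell$ agree for these covers.

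Cohomologous cocycles have the same product modulo $\ohb$ (this is the argument of Proposition~\ref{prop:bs}). Therefore $A_k(E;\h)=A_\ell(E;\h)+\ohb$ locally uniformly in $E$. Since the functions are only defined modulo $\ohb$ anyway, $\sigma_k(\h)=\sigma_\ell(\h)+\ohb$. For the Schrödinger case, I use the reflection $x\mapsto -x$, an antisymplectic-free unitary with $T\,P\,T^{-1}=P$, to get the same conclusion. The map $(x,\xi)\mapsto(-x,-\xi)$ is in fact symplectic, so this case is covered by the same argument.

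\textbf{Step 2: doublets.} Pick $E_k(\h)\in\sigma_k(\h)$, that is, $E_k(\h)=G_k(\h)(2\pi\h n)$. Step~1 gives $E_\ell(\h):=G_\ell(\h)(2\pi\h n)=E_k(\h)+\ohb$. Both lie in $\sigma(P)+\ohb$ by~\eqref{equ:bijection}, which proves item 1.

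For item 2, take $B$ the ball of radius $\h^N$ around $E_k(\h)$ and $\tilde B$ the slightly smaller ball of Step 4 of the proof of Theorem~\ref{theo:bs}. Both $\sigma_k$ and $\sigma_\ell$ meet $\tilde B$, since their elements lie within $\ohb\ll \h^N-\h^{N'}$ of the center. Step 4(1) then gives multiplicity of $B$ at least $2$; the underlying reason is that the quasimodes on $\C_k$ and $\C_\ell$ are $\ohb$-orthogonal. If $d=2$, only two $k$'s exist, so Step 4(2) bounds the multiplicity by $2$.

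\textbf{Main obstacle.} The delicate point is the upper bound: Step 4 controls $\tilde B$ and not $B$ itself. I would handle this by applying the bound to a ball of radius $\h^N+\h^{N'}$ whose shrunken version contains $B$. Since $d=2$, the count is still at most $2$.
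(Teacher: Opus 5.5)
Your proposal is correct and follows the same route as the paper. The paper's proof simply asserts that the symmetry makes the Bohr--Sommerfeld cocycles of $\C_k$ and $\C_\ell$ identical, and then reads both items off Theorem~\ref{theo:bs}; your transport of the local WKB generators by the commuting metaplectic operator $T$, together with your use of both halves of Step~4 (including the enlarged ball for the upper bound when $d=2$), is a faithful and more detailed expansion of exactly that argument.
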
 \begin{demo} The symmetry ensures
that the Bohr-Sommerfeld cocycles are the same for both connected
components. The result then follows directly from
Theorem~\ref{theo:bs}.  \end{demo}

See~\cite{duraffour-phd} for a thorough discussion of tunnelling for
1D pseudodifferential operators in relation to the Bohr-Sommerfeld
rules.

\medskip

Let us now count eigenvalues. The usual Weyl law (see for
instance~\cite[Theorem 10.1]{dimassi-sjostrand}) gives the number of
eigenvalues (with multiplicities) in $I$: \[ \mathcal{N}(P,I;\h) =
\frac{1}{2\pi\h} \textup{Vol}(H^{-1}(I)) + \O(1)\, \] where
$\textup{Vol}$ is the symplectic volume. So $\textup{Vol}(H^{-1}(I)) =
\int_{H^{-1}(I)} \abs{\omega}$ which by Stokes is (recall that
$\dd{\alpha}=\omega$) \begin{align*} \abs{\int_{H(x,\xi)=E_2} \alpha -
\int_{H(x,\xi)=E_2} \alpha} & = \sum_{k=1}^d
\abs{\oint_{\C_k(E_2)}\alpha - \oint_{\C_k(E_1)} \alpha} \\ & =
\sum_{k=1}^d \abs{A_{k,0}(E_2) - A_{k,0}(E_1)}\,.  \end{align*} Using
Theorem~\ref{theo:bs} we can not only go one step further in the
approximation, but also, surprisingly (maybe), obtain an exact
expression, without remainder.

\begin{coro}[Exact Weyl law] Assume $P$ has no subprincipal symbol.
Let $\tilde E_j(\h)\in I$, where $\h\in\interval[open left]01$ or $\h$
belongs to some subset accumulating at 0, be such
that \begin{enumerate} \item $\tilde{E}_j(\h)\to E_j$ as $h\to
0$; \item $\textup{dist}(\tilde E_j, \sigma(P))\geq \epsilon\h$ for
some $\epsilon>0$.  \end{enumerate} Then, for $\h$ small enough, the
number of eigenvalues of $P$ in $\tilde I := \interval{\tilde
E_1(\h)}{\tilde E_2(\h)}$ is exactly \begin{equation} \label{equ:weyl}
\mathcal{N}(P,\tilde I;\h) = \sum_{k=1}^d \lfloor \tfrac{1}{2\pi}
A_{k,0}(\tilde E_2) + \tfrac{1}{2} \rfloor - \lfloor \tfrac{1}{2\pi}
A_{k,0}(\tilde E_1) + \tfrac{1}{2} \rfloor \,.  \end{equation}
Moreover, for the fixed interval $I$, this gives \[
\mathcal{N}(P,I;\h) = \frac{1}{2\pi\h} \textup{Vol}(H^{-1}(I)) +
\frac{1}{2\pi}\big(\tau_k(E_2) - \tau_k(E_1)\big) + \delta(\h) +
\O(\h)\,, \] where $\tau_k(E)$ is the period of the Hamiltonian flow
of $H$ on $\C_k(E)$, and
$\abs{\delta(\h)}<1$.  \end{coro}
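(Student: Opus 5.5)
The plan is to count, component by component, the integers $n$ allowed in the branch description~\eqref{equ:n}, and then pass to the global count with the multiplicity statement of Theorem~\ref{theo:bs}. I read the $A_{k,0}$ in~\eqref{equ:weyl} as carrying the factor $\frac{1}{\h}$, i.e.\ as $\frac{1}{2\pi\h}A_{k,0}(\tilde E_j)$; otherwise the formula is not homogeneous. I fix the orientation of each $\C_k$ by $\ham{H}$, so that $A_{k,0}$ is increasing on $I$; its derivative is the period $\tau_k>0$, by Step 2 of the proof of Theorem~\ref{theo:bs}.

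\emph{Step 1: a single component.} When $P$ has no subprincipal symbol, item~5 of Theorem~\ref{theo:bs} gives $A_{k,1}=\mu\frac{\pi}{2}=\pi$, because the Maslov index of a loop in $\RM^2$ is $2$. Hence $A_k(E;\h)=\frac1\h A_{k,0}(E)+\pi+\O(\h)$, uniformly in $E\in I$. Since $E\mapsto A_k(E;\h)$ is an increasing diffeomorphism for small $\h$, the elements of $\sigma_k(\h)\cap\tilde I$ are in bijection with the integers $n$ satisfying $A_k(\tilde E_1;\h)\le 2\pi n\le A_k(\tilde E_2;\h)$. Their number is $\lfloor A_k(\tilde E_2;\h)/2\pi\rfloor-\lfloor A_k(\tilde E_1;\h)/2\pi\rfloor$, up to the endpoint issue handled in Step 2.

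\emph{Step 2: the gap hypothesis (main obstacle).} I must show that replacing $A_k$ by $\frac1\h A_{k,0}+\pi$ does not change either floor, and that nothing sits exactly at an endpoint. By Theorem~\ref{theo:bs}, every point of $\sigma_k(\h)$ lies within $\ohb$ of $\sigma(P)$. Since $\mathrm{dist}(\tilde E_j,\sigma(P))\ge\epsilon\h$, we get $\mathrm{dist}(\tilde E_j,\sigma_k(\h))\ge\epsilon\h/2$ for small $\h$. The map $G_k(\h)$ is uniformly Lipschitz with uniformly Lipschitz inverse, with slopes controlled by $\tau_k$. Therefore $A_k(\tilde E_j;\h)/2\pi$ stays at distance at least $c\epsilon>0$ from $\ZM$, with $c$ independent of $\h$. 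An $\O(\h)$ perturbation of the argument then leaves the floor unchanged for $\h$ small, which gives $\lfloor\frac1{2\pi\h}A_{k,0}(\tilde E_j)+\frac12\rfloor$.

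\emph{Step 3: summing over $k$.} I sum over $k$ and use the multiplicity statement in Step~4 of the proof of Theorem~\ref{theo:bs}. Equality~\eqref{equ:bijection} including multiplicities is only delicate near the boundary of the interval. The $\epsilon\h$ gap at $\tilde E_1,\tilde E_2$ keeps $\ohb$-clusters of eigenvalues, and their counterparts in the $\sigma_k$, strictly inside or strictly outside $\tilde I$. So the number of eigenvalues of $P$ in $\tilde I$, counted with multiplicity, equals the total number of points of $\bigsqcup_k\sigma_k(\h)\cap\tilde I$, which is~\eqref{equ:weyl}.

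\emph{Step 4: the fixed interval $I$.} I write $\lfloor x+\frac12\rfloor=x+\frac12-\{x+\frac12\}$ in~\eqref{equ:weyl}. The leading terms give $\frac{1}{2\pi\h}\sum_k\big(A_{k,0}(\tilde E_2)-A_{k,0}(\tilde E_1)\big)$, and the fractional parts give a bounded oscillating term, which is the origin of $\delta(\h)$. To handle the fixed endpoints $E_j$, I choose $\tilde E_j$ with $|\tilde E_j-E_j|\le C\h$ and satisfying the gap condition. This is possible by pigeonhole: within each $\sigma_k$ the points are $\ge c\h$ apart, so only $\O(1)$ points of $\bigsqcup_k\sigma_k$ lie in a window of size $C\h$. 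Taylor expansion gives $\frac1\h A_{k,0}(\tilde E_j)=\frac1\h A_{k,0}(E_j)+\tau_k(E_j)\frac{\tilde E_j-E_j}{\h}+\O(\h)$. The Stokes computation preceding the statement turns the leading terms into $\frac{1}{2\pi\h}\mathrm{Vol}(H^{-1}(I))$. The boundary contributions, namely the period terms and the $\O(1)$ eigenvalues lying between $E_j$ and $\tilde E_j$, produce the $\tau_k$ correction together with the bounded term $\delta(\h)$.

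Making the constants in Step~4 match exactly $\frac{1}{2\pi}(\tau_k(E_2)-\tau_k(E_1))$ with $|\delta|<1$ is where I expect the bookkeeping to need care. My first attempt is to choose $\tilde E_j-E_j$ as a suitable multiple of $\h$ and let $\delta$ absorb the fractional parts.
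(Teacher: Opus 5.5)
Your Steps 1--3 are correct and follow the paper's proof of the exact count~\eqref{equ:weyl}: count the integers in $[\beta_1,\beta_2]$ on each $\C_k$, use the $\epsilon\h$ gap to keep the $\beta_j$ uniformly away from $\ZM$, insert $A_{k,1}=\pi$, and sum over $k$. Your reading with $\frac{1}{2\pi\h}$ is the one the paper's proof uses. Your bi-Lipschitz argument for the distance to $\ZM$ and your treatment of endpoint clusters supply details that the paper only asserts.

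The gap is Step~4, and it is not a matter of bookkeeping. The two boundary contributions you list cancel each other instead of producing the $\tau_k$ term. Take $\tilde E_j=E_j+c_j\h$. Taylor expansion turns the count over $\tilde I$ into $\frac{1}{2\pi\h}\textup{Vol}(H^{-1}(I))+\frac{1}{2\pi}\sum_k\big(c_2\tau_k(E_2)-c_1\tau_k(E_1)\big)+\delta+\O(\h)$, with $\abs{\delta}<d$ (not $<1$, once $d$ fractional-part differences are summed). Near $E_j$, however, consecutive points of $\sigma_k(\h)$ are $2\pi\h/\tau_k(E_j)+\O(\h^2)$ apart. So the eigenvalues between $E_j$ and $\tilde E_j$, which you must add or remove to pass from $\tilde I$ to $I$, contribute $-\frac{1}{2\pi}\sum_k\big(c_2\tau_k(E_2)-c_1\tau_k(E_1)\big)$ plus an error bounded in terms of $d$. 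Whatever $c_j$ you choose, what survives is $\mathcal N(P,I;\h)=\frac{1}{2\pi\h}\textup{Vol}(H^{-1}(I))+\O(1)$, with the $\O(1)$ bounded by a constant depending only on $d$ and not on the periods. You can also see this directly by running Step~1 on $I$ itself, where $A_{k,1}\equiv\pi$ drops out of $\beta_2-\beta_1$.

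The ``Moreover'' formula therefore cannot be proved as stated, and it is false in general. Let $P=g(P_0)$ with $P_0=\frac12(-\h^2\partial_x^2+x^2)$, and let $g$ be smooth and strictly increasing, with $g(s)=\sqrt s$ near $[1,100]$ and $g(s)=s$ for large $s$. The Weyl symbol of $P$ is $g(\frac12(x^2+\xi^2))+\O(\h^2)$, so there is no subprincipal symbol. On $I=[1,10]$ one has $A_0(E)=2\pi E^2$, $\tau(E)=4\pi E$ and $\frac{1}{2\pi\h}\textup{Vol}(H^{-1}(I))=\frac{99}{\h}$. The eigenvalues $g(\h(n+\frac12))$ give $\abs{\mathcal N(P,I;\h)-\frac{99}{\h}}\le1$, whereas the claimed formula adds $\frac{1}{2\pi}(\tau(10)-\tau(1))=18$. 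The paper's own proof reaches the $\tau_k$ term by exactly the move you propose. It Taylor-expands as if $\tilde E_j-E_j=\h$, and then identifies the count over $\tilde I$ with the count over $I$. Your own remark about the eigenvalues lying between $E_j$ and $\tilde E_j$ shows why that identification fails.
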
 \begin{demo} We use the
decomposition~\eqref{equ:bijection} and count the number of elements
$N_k(\h)$ of each set $\sigma_k(\h)$.

  By~\eqref{equ:sigmak}, $N_k(\h) = \#{(A_k(\,\cdot\,;\h)^{-1}(2\pi
\ZM)) }$; in other words, it is the number of integers $n\in\ZM$ such
that $A_k(E;\h)=2\pi n$ for some $E\in I$. Since $A_{k,0}$ is a local
diffeomorphism, $A_k$ is, for $\h$ small enough, a monotonous
bijection from $I$ into its image, so $N_k(\h)$ is the number of
integers in the interval $\interval{\beta_1}{\beta_2}$, where
$\beta_j:=\beta_j(\h) = A_k(\tilde E_j(\h);\h)/2\pi$, $j=1,2$.  By
assumption, $\beta_j$'s stay away from integers by a distance at least
$\tilde\epsilon$, for some $\tilde\epsilon>0$ depending on
$\epsilon$. So they cannot be integers, and \[ N_k(\h) = \lfloor
\beta_2(\h) \rfloor - \lfloor\beta_1(\h) \rfloor \,.  \] We have \[
\beta_j(\h) = \frac{1}{2\pi\h}A_{k,0}(\tilde E_j) +
\frac{1}{2\pi}A_{k,1}(\tilde E_j) + \O(\h) \] and if $P$ has no
subprincipal symbol, we have $A_{k,1}(E) = \pi$. What's more, when
$\h$ is small enough so that the remainder $\O(\h)$ is less than
$\frac{\tilde \epsilon}{2}$, the sum $\frac{1}{2\pi\h}A_{k,0}(E_j) +
\frac{1}{2\pi}A_{k,1}(E_j)$ cannot be an integer either. This gives \[
N_k(\h) = \lfloor \tfrac{1}{2\pi\h} A_{k,0}(\tilde E_2) + \tfrac{1}{2}
\rfloor - \lfloor \tfrac{1}{2\pi\h} A_{k,0}(\tilde E_1) + \tfrac{1}{2}
\rfloor\,.  \] Summing over $k$, we obtain the correct number modulo
$\ohb$. But since we are after an integer number, we can disregard the
$\ohb$ remainder for $\h$ small enough, which proves~\eqref{equ:weyl}.

Finally, we can write \[ N_k(\h) = \tfrac{1}{2\pi\h} A_{k,0}(\tilde
E_2) - \tfrac{1}{2\pi\h} A_{k,0}(\tilde E_1) + \delta \] where
$\abs{\delta}<1$.  Choosing $\tilde E_j$ such that $E_j - \tilde E_j =
\O(\h)$, and Taylor expanding at order 2, we obtain: \begin{equation}
N_k(\h) = \tfrac{1}{2\pi\h}( A_{k,0}( E_2) - A_{k,0}(E_1)) +
\tfrac{1}{2\pi}(\tau_k(E_2) - \tau_k(E_1)) + \delta +
\O(\h) \end{equation} which yields the result.  \end{demo}

Of course, in the statement of this corollary, we may replace the
interval $I$ by any closed subinterval.

The assumption on the vanishing subprincipal symbol was not crucial;
we obtain a general formula by using the correct value for $A_{k,1}$,
which is the sum of the Maslov index and the integral over $\C_k$ of
the subprincipal symbol, see~\cite{san-focus}.

It also follows from Theorem~\ref{theo:bs} that the moving endpoints
$\tilde E_j(\h)$ in the statement of this corollary always
exist. Actually, in the pseudodifferential case, one can even choose
$\tilde E_j(\h) = E_j$, provided we select a subsequence of $\h$'s;
this is due to the drifting phenomenon of Corollary~\ref{coro:drift}.

\paragraph{Inverse problems.} Theorem~\ref{theo:bs} is also key in
solving inverse problems. Let me just mention the Schrödinger case,
where we wish to recover the potential $V$ from the
spectrum~\cite{colin-inverse}, and the general pseudodifferential or
Berezin-Toeplitz case, where we recover the principal symbol modulo
symplectomorphisms~\cite{san-inverse,lefloch-phd}. In both case the
Bohr-Sommerfeld rules are crucial.

\section{Beyond EBK}

What's nice about the sheaf approach that we have presented here is
the ease of generalization. Let us end this paper by exploring some
extensions to the WKB/EKB methods.

\paragraph{Smaller errors.}  While an error term of size $\ohb$ is
extremely accurate, it is still insufficient for some important
applications like quantum tunnel effects.  A better accuracy, namely
$\O(e^{-\epsilon/\h})$, can be reached under analyticity
assumptions. One needs for this Sjöstrand's microlocal
theory~\cite{sjostrand-asterisque}, which was recently proven to
extend to Berezin-Toeplitz
operators~\cite{san-rouby-sjostrand20,deleporte-analytic21,charles2019analytic,deleporte-hitrik-sjoestrand24}. The
application to exponentially precise Bohr-Sommerfeld rules was
recently obtained by Duraffour~\cite{duraffour2025-bs}, based on ideas
from~\cite{hs-harper1}, in the pseudodifferential case, and
Deleporte-Le Floch~\cite{deleporte-lefloch25} in the Berezin-Toeplitz
case.

\paragraph{Integrable systems.}  As was already noted by
Brillouin~\cite{brillouin1926}, the equations for the semiclassical
action $e^{\frac{i}{\h}S_\h(x)}$ can be solved by quadrature in the 1D
case, or if the variables are separated. In fact, one can go even
further. A quantum completely integrable systems, with a phase space
of dimension $2n$, is a set of $n$ pairwise commuting self-adjoint
semiclassical operators $(P_1,\dots, P_n)$. The corresponding
classical symbols $(H_1,\dots, H_n)$ form a Liouville integrable
system. Since the Darboux-Carathéodory theorem extends to such
integrable systems, it can be shown that the whole microlocal strategy
holds, see~\cite{san-focus,san-fn}. This establishes both WKB
solutions which are joint quasimodes of the system, and
Bohr-Sommerfeld rules for the joint spectrum. To my knowledge, the
first mathematical treatment of these joint Bohr-Sommerfeld rules can
be found in~\cite{colinII,charbonnel,anne-charbonnel}. A review of
these results is presented in~\cite{san-panoramas}.

\paragraph{Singularities.}  While the Darboux-Carathéodory theorem
applies only to \emph{regular} points, the microlocal sheaf strategy
can, surprisingly, be extended to energy level sets containing
\emph{singular points}. For instance, a local extremum of the
potential $V$. The easiest case concerns elliptic singularities, for
which the Hamiltonian is a microlocal perturbation of the Harmonic
oscillator;
see~\cite{helffer-robert,colinI,lefloch-elliptic,deleporte_wkb_2023}.
The most spectacular application concerns hyperbolic points (for
instance, a local maximum of $V$). It has been worked out by Colin de
Verdière-Parisse~\cite{colin-p3} for the pseudodifferential case, and
Le Floch~\cite{lefloch-hyperbolic} in the Berezin-Toeplitz case. The
main idea is to extend Proposition~\ref{prop:dim1} to a neighborhood
of the critical point. One can show that, this time, the space of
microlocal solutions has dimension not one but \emph{two}. Of course,
the level sets are not circles anymore, but immersed curves with
transversal intersections. The number of branches (4) at each crossing
fits nicely with the dimension 2 of the microlocal solutions, and
produces new \emph{singular} Bohr-Sommerfeld conditions by expressing
the vanishing of some determinant related to the topology of the
critical fiber~\cite{colin-p3}. The analogous question for degenerate
singularities, for instance a quartic oscillator $\xi^2+x^4 +
\O(x^5)$, is still quite open;
see~\cite{colin-singularites,san-nikolay23}.

\paragraph{Non-selfadjoint operators.}  Given the heavy use of real
symplectic geometry that the microlocal method requires, it seems
hazardous to extend it to non-selfadjoint operators, \emph{i.e.}
semiclassical operators whose symbols are not necessarily
real-valued. Nevertheless, it turned out that this can be achieved,
and symplectic normal forms in the complexified phase space
remain very effective. For small non-selfadjoint perturbations of
self-adjoint pseudodifferential operators, Bohr-Sommerfeld rules with
$\ohb$ remainder have been obtained by Rouby~\cite{rouby-17},
following the pioneer works of
Melin-Sjöstrand~\cite{melin-sjostrand-non} and
Hitrik~\cite{Hitrik04}. The extension to more general 1D operators is
currently an active area of research, see for
instance~\cite{hitrik-zworski25,deleporte-lefloch25,san-bonthonneau-duraffour,reguer25}.

\bigskip

\paragraph{Acknowledgements.}
I would like to warmly thank Antide Duraffour and the anonymous
referee for their positive feedback and their valuable remarks and
corrections.

\bibliographystyle{abbrv}%
\bibliography{bibli-utf8}
\end{document}